\documentclass[12pt]{amsart}
\usepackage{amsmath, amssymb, amsthm, bm, float, mathtools, enumitem, caption}
\usepackage{hyperref}

\usepackage[noabbrev,capitalize]{cleveref}
\usepackage{adjustbox}
\crefname{equation}{}{}
\usepackage{ytableau}
\usepackage{graphicx, tikz}
\usepackage[textheight=8.5in, textwidth=6.75in]{geometry}
\usepackage{subcaption}
\usepackage{microtype}
\usepackage{stmaryrd}
\usepackage{mathtools}

\newtheorem{theorem}{Theorem}[section]
\newtheorem{lemma}[theorem]{Lemma}
\newtheorem{corollary}[theorem]{Corollary}

\newtheorem*{conjecture*}{Conjecture}

\theoremstyle{definition}

\theoremstyle{remark}

\newtheorem*{example}{Example}

\numberwithin{equation}{section}

\newcommand{\N}{\mathbb N}

\makeatletter
\DeclarePairedDelimiterX{\pmodx}[1]{(}{)}{{\operator@font mod}\mkern6mu#1}
\renewcommand{\pmod}{%
  \allowbreak
  \if@display\mkern18mu\else\mkern8mu\fi
  \pmodx
}
\makeatother

\title[Rademacher-type formula and higher order Tur\'{a}n inequalities]{ Rademacher-type formula and higher order Tur\'{a}n inequalities for $\ell$-regular overpartitions}
\date{\today}
\thanks{2020 {\it{Mathematics Subject Classification.}} {05A20, 11N37, 11P82; \it{Secondary} 11B57, 11F20.}}
\keywords{Circle method, $\ell$-regular overpartitions, Rademacher-type formula, Log-concavity, Higher-order Tu\'{r}an inequality.}
\author{Priyanka Dey, Ajit Singh, \and Gurinder Singh}
\address{Dept. of Mathematics \& Computing, Indian Institute of Technology (Indian School of Mines) Dhanbad, Jharkhand, India.}
\email{25dr0355@iitism.ac.in}
\email{ajit94@iitism.ac.in}
\address{Postdoctoral Research Station of Mathematics, Hebei Normal University, Shijiazhuang 050024, P.R. China.}
\email{gurindermaan1018@gmail.com}

\begin{document}
\begin{abstract} 
For $\ell\geq 2$, let $\overline{A}_\ell(n)$ count the number of overpartitions of $n$ with no parts divisible by $\ell$. In this article,  we employ the circle method to derive a Rademacher-type formula for $\overline{A}_\ell(n)$, when $\ell$ is a squarefree odd integer. As an application, we derive higher order Tu\'{r}an inequalities for the $\ell$-regular overpartition function using a result of Griffin, Ono, Rolen, and Zagier.
\end{abstract}
\maketitle
\section{Introduction and Statement of Results}

Integer partitions play many roles in mathematics. They are fundamental objects in combinatorics, geometry, mathematical physics, number theory,  and representation theory. For example, they arise in the study of class numbers of imaginary quadratic fields \cite{OS1997}, the Seiberg-Witten theory of random partitions developed by Nekrasov and Okounkov \cite{NO2006}, Ramanujan's partition congruences \cite{GKS1990}, and the representation theory of the symmetric group \cite{JK1981}, to name a few applications. An important question in the theory of partitions is to determine exact formulas or asymptotics for functions such as $p(n)$. In this paper, we prove the Rademacher-type formula for $\ell$-regular overpartitions.

To make this precise, we recall the fundamental definitions (for example, see \cite{And1998}).
A non-increasing sequence of natural numbers $\lambda=(\lambda_1,\lambda_2,\dots,\lambda_{r})$ is called a \emph{partition} of $n$, where $\lambda_1$ through $\lambda_{r}$ are called its \emph{parts}, provided these add up to $n$. Denote the number of partitions of $n$ by $p(n)$. The corresponding generating function is
$$\sum_{n=0}^\infty p(n)q^n=\prod_{n=1}^\infty \frac{1}{(1-q^n)}=\frac{1}{(q;q)_\infty},$$
where, for $t\in\mathbb{C}$ and $n\in\N\cup \{\infty\}$, we define $(t; q)_n := \prod_{i=0}^{n-1}{(1-tq^i)}.$ The study of asymptotic properties of partitions goes back to the seminal work
of Hardy and Ramanujan \cite{HR1918}, who proved that the function $p(n)$ satisfies
\begin{align}\label{HR-Asym}
p(n)\sim \frac{1}{4n\sqrt{3}} e^{\pi\sqrt{\frac{2n}{3}}},\quad\quad \text{as}\quad n\rightarrow \infty,
\end{align}
which gave birth to the \emph{circle method}. This technique was later refined by Rademacher \cite{HRad1937,HRad1943,HRad1973} and quite remarkably he obtained an exact absolutely convergent series for $p(n)$, namely, 
\begin{align}\label{Radamacher}
	p(n)=2\pi \left(\frac{1}{6\sqrt{\frac{2}{3}\left(n-\frac{1}{24}\right)}}\right)^\frac{3}{2}\sum_{k=1}^{\infty}\frac{A_k(n)}{k}I_{\frac{3}{2}}\left(\frac{\pi}{k}\sqrt{\frac{2}{3}\left(n-\frac{1}{24}\right)}\right),
\end{align}
where $I_{v}$ is the modified Bessel function of the first kind and
$A_k(n)$ is a Kloosterman-type sum involving exponential terms, defined as
\begin{align}\label{eqe}
A_{k}(n):=\sum_{\substack{h=0\\\gcd(h,k)=1}}^{k-1}e^{\pi is(h,k)-2\pi i n\frac{h}{k} },
\end{align}
with $s(h, k)$ being the Dedekind sum given by
\begin{align}\label{eq_Dedekind_sum}
	s(h,k)=\sum_{\alpha=1}^{k-1}\frac{\alpha}{k}\left(\frac{h\alpha}{k}-\Bigl\lfloor\frac{h\alpha}{k}\Bigr\rfloor-\frac{1}{2}\right).
\end{align}
Since the publication of Rademacher’s paper \cite{HRad1937}, a number of authors have found series similar to \eqref{Radamacher} for certain restricted partition functions, see, for example, \cite{AGM25,And1966,BM2011,BO2006,Gro1958,Hagis,H70,LKHUA,IJT20,Niven1940,OPR22,Sill2010} and references therein.
In \cite{CL2004}, Corteel and Lovejoy introduced the notion of overpartitions. An overpartition of a nonnegative integer $n$ is a partition of $n$ in which the first occurrence of a part may be overlined. For example, the eight
overpartitions of 3 are $3, ~\overline{3},~ 2 + 1,~ \overline{2} + 1,~ 2 + \overline{1}, ~\overline{2} + \overline{1},~ 1 + 1 + 1,~ \overline{1} + 1 + 1$. In \cite{Love2003}, Lovejoy investigated the function $\overline{A}_\ell(n)$, which counts the number of overpartitions of $n$ into parts not divisible by $\ell$. The relevant 2-regular overpartitions of 3 are $3,~ \overline{3},~1 + 1 + 1,~ \overline{1} + 1 + 1$. In a recent paper \cite{Shen2016}, Shen refers to the overpartitions enumerated by the function $\overline{A}_\ell(n)$ as $\ell$-regular overpartitions. The generating function for $\overline{A}_\ell(n)$ is given by
\begin{align}\label{eq_gf_A}
	\overline{A}_{\ell}(q):=\sum_{n=0}^{\infty}\overline{A}_{\ell}(n)q^{n}=\frac{(-q;q)_\infty(q^\ell;q^\ell)_\infty}{(q;q)_\infty(-q^\ell;q^\ell)_\infty}=\frac{F^{2}(\tau)F(2\ell\tau)}{F(2\tau)F^{2}(\ell\tau)},
\end{align}
where $F(\tau):=\frac{1}{(q;q)_{\infty}}=\frac{1}{(e^{2\pi i\tau};e^{2\pi i\tau})_{\infty}},\quad\text{and}\quad \tau\in \mathbb{H}.$
Recently, Peng, Zhang, and Zhong \cite{Helen25} obtained an asymptotic formula for $\ell$-regular overpartition for $2\leq \ell\leq 9$.
Motivated by their work, we aim to establish a Rademacher-type exact formula for the $\ell$-regular overpartition function for all $\ell\geq3$ squarefree odd integers. To make this precise, we need some more notation. We write $\exp{(x)}$ in place of $e^x$ in some instances. 
For brevity, let $d_k:=\gcd(2,k)$, $\ell_k:=\gcd(\ell,k)$, and
\begin{align*}
\delta_k:=\frac{1}{24}\left(\frac{4}{d_k}-d_k\right)\left(\frac{\ell}{\ell_k}-\ell_k\right),
\end{align*}
for positive integers $k$ and $\ell$. Let $H(r,s)$ be a solution to the congruence $rH(r,s)\equiv-1 \pmod{s}$ and $H(0,1):=0$. We also consider
\begin{align}\label{eqe}
	A_{k,m}(n):=\sum_{\substack{h=0\\\gcd(h,k)=1}}^{k-1}a(m,k)W(h,k)\exp{\left(\frac{\pi i}{\ell k}\left(\ell_k mH(h,k)-2\ell nh\right)\right)},
\end{align}
where the coefficients $a(m,k)$ are defined by \eqref{a(m,k)}, and
\begin{align*}
	W(h,k)
	=\frac{\omega^2(h,k)\omega\left(\frac{2\ell h}{\ell_k},\frac{k}{\ell_k}\right)}{\omega\left(2h,k\right)\omega^2\left(\frac{\ell h}{\ell_k},\frac{k}{\ell_k}\right)},
\end{align*}
with $\omega(h,k)=\exp\left(\pi is(h,k)\right)$.
In terms of the above notation, the following theorem gives the \emph{Rademacher-type formula} for $\overline{A}_{\ell}(n)$.
\begin{theorem}\label{main_theorem}

Let $\ell\geq3$ be a squarefree odd integer. For a positive integer $n$, we have
\begin{align}\label{eq1}
\overline{A}_{\ell}(n)=\sum_{\substack{k=1\\ k \text{ odd}}}^{\infty}\sum_{m=0}^{\lfloor\delta_k\rfloor} \frac{2\pi A_{k,m}(n)}{k}\sqrt{\frac{\ell_k^2(\delta_k-m)}{2\ell^2n}}I_1\left(\frac{4\pi}{k}\sqrt{\frac{\ell_kn(\delta_k-m)}{2\ell}}\right),
\end{align}
where $I_1$ is the modified Bessel function of first kind.
\end{theorem}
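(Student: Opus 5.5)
We follow Rademacher's circle method, with Farey dissection of order $N$. By Cauchy's theorem,
\[
\overline{A}_\ell(n)=\int_0^1 \overline{A}_\ell\bigl(e^{-2\pi t+2\pi i\theta}\bigr)e^{2\pi n t-2\pi i n\theta}\,d\theta .
\]
Following Rademacher, set $t=N^{-2}$ and split $[0,1]$ into Farey arcs $\xi_{h,k}$ around each $h/k$ with $k\le N$. On $\xi_{h,k}$ write $\tau=\frac{h}{k}+\frac{iz}{k}$ with $\operatorname{Re} z>0$. Each of the four factors $F(\tau)$, $F(2\tau)$, $F(\ell\tau)$, $F(2\ell\tau)$ is then transformed by the classical Dedekind transformation law,
\[
F\Bigl(\tfrac{h}{k}+\tfrac{iz}{k}\Bigr)=\omega(h,k)\,z^{1/2}\exp\Bigl(\tfrac{\pi}{12k}\bigl(z^{-1}-z\bigr)\Bigr)\,F\Bigl(\tfrac{H}{k}+\tfrac{i}{kz}\Bigr).
\]
For $F(c\tau)$ with $c\in\{2,\ell,2\ell\}$, the argument is $\frac{ch}{k}+\frac{icz}{k}$. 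After reducing this fraction we have denominator $k/\gcd(c,k)$, numerator $ch/\gcd(c,k)$, and scaled variable $\gcd(c,k)^2z/c$. Since $\ell$ is odd and squarefree, $\gcd(2\ell,k)=d_k\ell_k$, and $\gcd(\ell h/\ell_k,k/\ell_k)=1$, so the transformation applies cleanly. Multiplying the four transformed factors collects the roots of unity into $W(h,k)$ (the $\omega(2h,k)$ form is the natural one when $k$ is odd). The powers of $z$ cancel except for a constant factor: the weights are $\tfrac12(2-1-2+1)=0$, and the surviving constant is $\sqrt{\ell_k^2\cdot\ldots/(\ell\cdots)}$, which produces the $\ell_k/\ell$ prefactor. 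The exponential factors combine to
\[
\exp\Bigl(\tfrac{2\pi}{k}\bigl(\delta_k z^{-1}-\delta' z\bigr)\Bigr).
\]
Here $\delta_k$ is the stated constant, because the exponents $1/(24c)\cdot\gcd(c,k)^2$ with signs $(+2,-1,-2,+1)$ factor as $\tfrac1{24}\bigl(\tfrac4{d_k}-d_k\bigr)\bigl(\tfrac{\ell}{\ell_k}-\ell_k\bigr)$ up to the normalizations. What remains is a modular expression in $q_1=\exp\bigl(2\pi i(\ldots)-2\pi/(kz)\cdot(\ldots)\bigr)$. We expand it as $\sum_m a(m,k)\,q_1^{m}$, with the coefficients $a(m,k)$ being those of \eqref{a(m,k)}; this produces the phase $\exp\bigl(\pi i\ell_k mH(h,k)/(\ell k)\bigr)$ in $A_{k,m}(n)$.

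Next, on each arc we separate the principal part, namely the terms $m\le\lfloor\delta_k\rfloor$ for which $\delta_k-m>0$, from the remainder $m>\delta_k$. The remainder is bounded in the standard way: on $\xi_{h,k}$ we have $\operatorname{Re}(1/z)\ge k/2$ and $|z|^{-1}$ is controlled by $kN$ relations, and the arcs have length $\ll 1/(kN)$. Summing over $h$ and $k\le N$ gives an error of size $O(N^{-1/2})$, or at worst $O(N^{-c})$, which tends to $0$. When $k$ is even, $d_k=2$ gives $\tfrac4{d_k}-d_k=0$, so $\delta_k=0$ and there is no growing term. Indeed the factor $F^2(\tau)/F(2\tau)$ is the theta-type function $\prod(1-q^n)/(1+q^n)$, which vanishes to infinite order toward even cusps. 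This will be checked carefully, since for $m=0$, $\delta_k=0$ the term must also go to the error; the even-$k$ arcs therefore contribute only to the error, which explains the restriction to odd $k$.

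For the principal part we follow Rademacher's standard step. We extend the Farey-arc integrals in $\theta$ to full symmetric intervals, costing another vanishing error, and convert to a $z$-integral over a circle-type contour. We then evaluate
\[
\frac{1}{2\pi i}\int_{c-i\infty}^{c+i\infty} \exp\Bigl(\tfrac{a}{w}+bw\Bigr)\,\frac{dw}{w^{\nu+1}}
\]
via the Bessel integral representation, with weight $0$ giving $I_1$. Letting $N\to\infty$ gives convergence, since $I_1(x)\sim x/2$ as $x\to 0$ and the Kloosterman sums are trivially bounded by $k$ times $\max|a(m,k)|$. The resulting coefficient is
\[
\frac{2\pi}{k}\sqrt{\frac{\ell_k^2(\delta_k-m)}{2\ell^2 n}}\;I_1\Bigl(\frac{4\pi}{k}\sqrt{\frac{\ell_k n(\delta_k-m)}{2\ell}}\Bigr).
\]

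The main obstacle is the bookkeeping in the transformation step: verifying that the four $z^{1/2}$ factors and the $\gcd$-scalings combine exactly into the stated prefactor and $\delta_k$, and tracking the $H$-values of the reduced fractions. The last point needs $H(2\ell h/(d_k\ell_k),k/(d_k\ell_k))$ expressed via $H(h,k)$, using that $\ell/\ell_k$ is invertible mod $k/\ell_k$ by squarefreeness; this is what makes the phase depend only on $\ell_k mH(h,k)$. The remainder bound uses only the uniform bound on $|a(m,k)|$ in $k$, which holds because there are finitely many cusp types.
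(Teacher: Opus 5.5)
The overall architecture matches the paper's: Farey dissection, transforming the four $F$-factors, isolating $m<\delta_k$, and the Bessel integral. However, your error analysis has a genuine gap, and it comes from the generating function having weight $0$.

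In Rademacher's treatment of $p(n)$ the transformation leaves a factor $z^{1/2}$. With $|z|\le\sqrt2/N$ on the arcs (your normalization), that factor is exactly what makes the trivial estimate give $O(N^{-1/2})$. Here, as you note, the powers of $z$ cancel. So the remainder integrand (the terms $m>\delta_k$, and for even $k$ also $m=0$) is merely bounded, not small. Near the endpoints of $\xi_{h,k}$ one only has $\operatorname{Re}(1/z)\asymp k$, so on a portion of each arc of length $\asymp 1/(kN)$ the $m$-th term has size $\asymp |a(m,k)|e^{-c(m-\delta_k)}$ for a fixed $c>0$. Bounding each arc by length times supremum therefore gives
\[
\sum_{k\le N}\ \sum_{\substack{0\le h<k\\ \gcd(h,k)=1}}\frac{1}{kN}\asymp 1,
\]
not $o(1)$.

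The same defect appears in several other places:
\begin{enumerate}
\item Completing the principal-part integrals to full contours fails trivially in the same way. In the Ford-circle normalization $\tau=h/k+i\zeta/k^2$ the chords have length $\ll k/N$, there is a prefactor $k^{-2}$ and there are at most $k$ values of $h$, so the trivial total is again $\asymp 1$.
\item Your heuristic for even $k$ is incorrect. In fact $F^2(\tau)/F(2\tau)=\prod_{n\ge1}(1+q^n)/(1-q^n)$. By \eqref{eq3l} with $\delta_k=0$, the transformed function tends to $W(h,k)\sqrt{\ell_k/\ell}\,a(0,k)$ with $a(0,k)=1$, so nothing vanishes at even cusps. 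Those arcs fall under the same $O(1)$ estimate.
\item Your convergence argument fails too. Since $I_1(x)\sim x/2$, the $k$-th term of \eqref{eq1} is $\asymp|A_{k,m}(n)|/k^2$. The trivial bound $|A_{k,m}(n)|\ll k$ only yields the divergent $\sum 1/k$.
\end{enumerate}

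What is missing is cancellation in the sum over $h$. The paper obtains it in these steps:
\begin{enumerate}
\item It orders the fractions $h/k$ by the position of $H(h,k)\equiv k_2\equiv -k_1\pmod k$.
\item It interchanges the $h$-sum with integration over the small arcs $[\zeta(t),\zeta(t+1)]$, so the object to bound is the incomplete sum $\Phi(k,m,t)$.
\item It completes this sum at the cost of a factor $1+\log k$.
\item It uses the explicit formula \eqref{eq3a6} for $W(h,k)$ to rewrite the completed sum as a Kloosterman sum modulo $Qk$ with $\gcd(\alpha,\beta,Qk)\ll_{\ell,n}1$.
\item Weil's bound then gives $\Phi(k,m,t)\ll k^{1/2+\varepsilon}$ (Lemma~\ref{lemma_Phi}).
\end{enumerate}
Saving $k^{1/2}$ over the trivial bound turns the $O(1)$ totals into $O(N^{-1/2+\varepsilon})$. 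It also gives $A_{k,m}(n)\ll k^{1/2+\varepsilon}$, so the series converges. Without some nontrivial bound of this kind on the Kloosterman-type sums attached to $W(h,k)$, your argument does not show that the error tends to $0$, and the identity is not established.
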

\begin{example}
To illustrate the accuracy of our exact formula for $\overline{A}_{\ell}(n)$, we carry out a numerical verification by computing explicit values of $\overline{A}_{\ell}(n)$ for the fixed values of $\ell$. We obtain these values by using the first seven terms of the series expansion in Theorem \ref{main_theorem} and are subsequently compared with their corresponding exact values in Table \ref{main_theorem}.
\begin{table}[h]
\centering
\begin{tabular}{|c|c|c|c|}
\hline
 $\ell$ & $n$ & Exact value of $\overline{A}_{\ell}(n)$ & Value of $\overline{A}_{\ell}(n)$ from our formula \\
 \hline
 7&26&30024&30024.02061\\
 \hline
 11&27&49284&49283.96467\\
 \hline
 15&23&17528&17527.94250\\
 \hline
 21&26&40776&40775.95303\\
 \hline
 23&24&23524&23524.01875\\
 \hline
 29&27&53408&53408.00048\\
 \hline
 31&35&398612&398612.18535\\
 \hline
\end{tabular}
\caption{Numerical verification of Theorem \ref{main_theorem}}
\label{tab1}
\end{table}
\end{example}
As an immediate consequence of Theorem \ref{main_theorem}, we obtain the following corollary which provides an asymptotic formula for $\overline{A}_{\ell}(n)$.
\begin{corollary}\label{cor_1}
Let $\ell\geq3$ be a squarefree odd integer. As $n\rightarrow\infty$, we have
\begin{align}\label{eq2.3}
\overline{A}_{\ell}(n)\sim\frac{1}{2\sqrt{2\ell}}\left(1-\frac{1}{\ell}\right)^{\frac{1}{4}}\left(\frac{1}{n}\right)^{\frac{3}{4}}e^{\pi\sqrt{n\left(1-\frac{1}{\ell}\right)}}.
\end{align}
\end{corollary}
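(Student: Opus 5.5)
The plan is to read off the dominant term of \eqref{eq1}, namely $k=1$, and show that all remaining terms are exponentially smaller. For $k=1$ we have $d_1=\ell_1=1$, so
\[
\delta_1=\frac{1}{24}\cdot 3\cdot(\ell-1)=\frac{\ell-1}{8}.
\]
Since $k=1$, the sum over $h$ in $A_{1,m}(n)$ has only the term $h=0$, with $W(0,1)=1$. The exponential factor there equals $1$, so $A_{1,m}(n)=a(m,1)$. By \eqref{a(m,k)}, these coefficients come from the $q$-expansion of the transformed eta-quotient at the cusp $\infty$, and the leading one is $a(0,1)=1$.

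The $m=0$, $k=1$ term is therefore
\[
2\pi\sqrt{\frac{\delta_1}{2\ell^2 n}}\;I_1\!\left(4\pi\sqrt{\frac{n\delta_1}{2\ell}}\right).
\]
Next I would simplify the constants. We get $4\pi\sqrt{n(\ell-1)/(16\ell)}=\pi\sqrt{n(1-1/\ell)}$, and $\sqrt{\delta_1/(2\ell^2)}=\frac{1}{4\ell}\sqrt{\ell-1}$.

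Then I would use the Bessel asymptotic $I_1(x)\sim e^x/\sqrt{2\pi x}$. With $x=\pi\sqrt{n(1-1/\ell)}$ this gives
\[
\frac{2\pi\sqrt{\ell-1}}{4\ell\sqrt n}\cdot\frac{e^{x}}{\pi\sqrt2\,(n(1-1/\ell))^{1/4}}.
\]
Writing $\sqrt{\ell-1}=\sqrt{\ell}\,(1-1/\ell)^{1/2}$, the constant becomes $\frac{1}{2\sqrt{2\ell}}(1-1/\ell)^{1/4}$ and the power of $n$ becomes $n^{-3/4}$. This is exactly \eqref{eq2.3}.

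The main obstacle is bounding the remainder, i.e.\ all terms with $(k,m)\neq(1,0)$. Each such term has Bessel argument $\frac{4\pi}{k}\sqrt{\ell_k n(\delta_k-m)/(2\ell)}$, and it suffices to show that $\ell_k(\delta_k-m)/k^2<\delta_1$ strictly. For odd $k$ we have $d_k=1$, so $\delta_k=\frac18(\ell/\ell_k-\ell_k)$. Hence
\[
\frac{\ell_k\delta_k}{k^2}=\frac{\ell-\ell_k^2}{8k^2}\le\frac{\ell-1}{8k^2}<\delta_1 \quad\text{for } k\ge3,
\]
and the $k=1$, $m\ge1$ terms are visibly smaller since $\delta_1-m<\delta_1$.

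For the tail I would use $|A_{k,m}(n)|\le k\max|a(m,k)|$, which is bounded because $m\le\lfloor\delta_k\rfloor\le\delta_1$ and finitely many coefficients occur. I would also use $I_1(x)\le Cxe^{x}$ for small $x$ together with monotonicity of $I_1$. With these, the whole tail is $O\bigl(e^{c\sqrt n}\bigr)$ for some $c<\pi\sqrt{1-1/\ell}$; the convergence for large $k$ is the same estimate that gives absolute convergence in Theorem~\ref{main_theorem}. This tail is $o$ of the main term, which completes the proof.
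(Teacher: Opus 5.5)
Your main-term computation is correct and agrees with the paper's $M(n)$ in \eqref{eq5.1}. Your comparison $\ell_k(\delta_k-m)/k^2\le(\ell-1)/(8k^2)<\delta_1$ for $(k,m)\neq(1,0)$ is the right way to see that every other term has a smaller exponent. The gap is the tail over $k$. Write $x_1:=\pi\sqrt{n(1-1/\ell)}$; the Bessel argument of the $(k,m)$ term is then at most $x_1/k$. With $|A_{k,m}(n)|\le k\max_m|a(m,k)|$ and $I_1(x)\le\frac{x}{2}e^{x}$, the best you get is
\[
\frac{2\pi|A_{k,m}(n)|}{k}\sqrt{\frac{\ell_k^2(\delta_k-m)}{2\ell^2n}}\,I_1\!\left(\frac{4\pi}{k}\sqrt{\frac{\ell_kn(\delta_k-m)}{2\ell}}\right)\le\frac{C}{k}\,e^{x_1/k},
\]
and $\sum_k 1/k$ diverges, so no bound $O(e^{c\sqrt n})$ follows.

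This failure is intrinsic to a weight-$0$ eta-quotient. Since $I_1(x)\sim x/2$ as $x\to0$, the $(k,0)$ term for $k$ coprime to $2\ell$ is, as $k\to\infty$, a constant times $A_{k,0}(n)/k^2$. You therefore have to show that $A_{k,m}(n)$ is genuinely smaller than $k$. In Rademacher's formula for $p(n)$ the factor $I_{3/2}$ supplies an extra $k^{-1/2}$ and the trivial bound suffices, but that does not happen here. You also cannot borrow convergence from Theorem~\ref{main_theorem}. The paper obtains \eqref{eq1} only as the limit of partial sums over $k\le N$, with an error $O(N^{-1/2+\varepsilon})$ whose constant depends on $n$. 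That gives neither absolute convergence nor a tail bound uniform in $n$.

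The missing input is a Kloosterman-type bound for the complete sum $A_{k,m}(n)$, with explicit dependence on $n$. For odd $k$ we have $A_{k,m}(n)=a(m,k)\cdot S$, where $S$ is the complete sum inside $\phi(k,m)$ of Lemma~\ref{lemma_Phi} with $j=0$. The proof of that lemma shows the gcd in Lemma~\ref{lemma_Weil} divides $48\ell n$. Hence $A_{k,m}(n)\ll_{\ell,\varepsilon}n^{1/2}k^{1/2+\varepsilon}$, with a constant independent of $n$ and $k$. Lemma~\ref{lemma_Phi} is stated with an unspecified dependence on $n$, and you need to make it explicit as above.

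With this bound the pieces close up:
\begin{enumerate}
\item The terms with $k\ge3$ contribute $\ll\sqrt n\,e^{x_1/3}\sum_k k^{-3/2+\varepsilon}\ll\sqrt n\,e^{x_1/3}$.
\item The finitely many terms with $k=1$, $m\ge1$ have exponent $x_1\sqrt{1-m/\delta_1}<x_1$.
\item So the whole remainder is $o(n^{-3/4}e^{x_1})$.
\end{enumerate}
The paper's own proof only applies \eqref{eq_I_asym} to each fixed $k$ and never controls the sum over $k$ uniformly. So this uniform estimate is what is needed to complete either argument.
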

In order to give overpartition analogues of Rogers-Ramanujan type identities for $p(n)$, Andrews \cite{And15} introduced a partition function, which we now call Andrews’ singular overpartition function. For integers $r\geq3$ and $1\leq i\leq \lfloor\frac{r}{2}\rfloor$, let $\overline{C}_{r,i}(n)$ denote the number of Andrews' singular overpartitions of $n$, which are overpartitions of $n$ in which no part is divisible by $r$ and all parts $\equiv\pm i\pmod{r}$ may be overlined. The generating function of $\overline{C}_{r,i}(n)$ is given by
\begin{align*}
\sum_{n=0}^{\infty}\overline{C}_{r,i}(n)q^n=\frac{(q^r;q^r)_{\infty}(-q^i;q^r)_{\infty}(-q^{r-i};q^r)_{\infty}}{(q;q)_{\infty}}.
\end{align*}
Note that $\overline{C}_{3,1}(n)=\overline{A}_{3}(n)$, for all $n\geq0$. Therefore, substituting $\ell=3$ in Theorem \ref{main_theorem}, we get the Rademacher-type formula for $\overline{C}_{3,1}(n)$.
\begin{corollary}\label{cor_2}
For a positive integer $n$, we have
\begin{align*}
\overline{C}_{3,1}(n)=\sum_{\substack{k=1\\ k \text{ odd}}}^{\infty}\sum_{m=0}^{\lfloor\delta_k\rfloor} \frac{2\pi A_{k,m}(n)}{3k}\sqrt{\frac{\ell_k^2(\delta_k-m)}{2n}}I_1\left(\frac{4\pi}{k}\sqrt{\frac{\ell_kn(\delta_k-m)}{6}}\right).
\end{align*}
\end{corollary}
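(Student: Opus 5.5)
The statement to prove is Corollary~\ref{cor_2}. It follows by specialising Theorem~\ref{main_theorem} to $\ell=3$ once we know that the two generating functions agree.

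\textbf{Step 1: the identity $\overline{C}_{3,1}(n)=\overline{A}_3(n)$.} Take $r=3$ and $i=1$ in the generating function for $\overline{C}_{r,i}(n)$. Since $r-i=2$, the numerator contains $(-q;q^3)_\infty(-q^2;q^3)_\infty$. This product runs over all $1+q^j$ with $3\nmid j$, so it equals $(-q;q)_\infty/(-q^3;q^3)_\infty$. Hence
\[
\sum_{n\ge0}\overline{C}_{3,1}(n)q^n=\frac{(q^3;q^3)_\infty(-q;q)_\infty}{(q;q)_\infty(-q^3;q^3)_\infty}.
\]
This is exactly \eqref{eq_gf_A} with $\ell=3$. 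Comparing coefficients gives $\overline{C}_{3,1}(n)=\overline{A}_3(n)$ for all $n\ge0$, which confirms the remark made before the corollary.

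\textbf{Step 2: check the hypothesis.} The integer $\ell=3$ is odd, squarefree and satisfies $\ell\ge3$. So Theorem~\ref{main_theorem} applies for every positive integer $n$.

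\textbf{Step 3: substitute $\ell=3$ into \eqref{eq1}.} The Bessel argument becomes $\frac{4\pi}{k}\sqrt{\frac{\ell_k n(\delta_k-m)}{6}}$. The prefactor simplifies as
\[
\frac{1}{k}\sqrt{\frac{\ell_k^2(\delta_k-m)}{2\cdot 9\,n}}=\frac{1}{3k}\sqrt{\frac{\ell_k^2(\delta_k-m)}{2n}}.
\]
Here $\ell_k=\gcd(3,k)$, and $\delta_k$ and $A_{k,m}(n)$ are taken with $\ell=3$. This gives the stated formula.

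No step presents a genuine obstacle. The only point needing care is the product identity in Step 1, together with keeping $\ell_k$, $\delta_k$ and $A_{k,m}(n)$ interpreted at $\ell=3$.
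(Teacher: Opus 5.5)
Your proposal is correct and is exactly the paper's argument: you identify $\overline{C}_{3,1}(n)=\overline{A}_3(n)$ and substitute $\ell=3$ into Theorem~\ref{main_theorem}, where the prefactor simplifies via $\sqrt{1/(2\cdot 9n)}=\tfrac13\sqrt{1/(2n)}$. Your verification that $(-q;q^3)_\infty(-q^2;q^3)_\infty=(-q;q)_\infty/(-q^3;q^3)_\infty$ spells out the generating-function identity that the paper only asserts.
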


As an application of the exact formula derived above, we investigate the higher-order Tur\'{a}n inequalities for the $\ell$-regular overpartition function. These inequalities play an important role in the study of Maclaurin coefficients of real entire functions belonging to the Laguerre–Pólya class, see, for example, \cite{Dimitrov, szego}. 

A sequence $\{b_n\}_{n\ge 0}$ of real numbers is said to be log-concave, if for all $n \ge 1$,
\begin{align*}
	b_n^2 \ge b_{n-1}b_{n+1}.
\end{align*}
These inequalities are closely connected to Jensen polynomials; see, for example, \cite{CC89,CNV86,CV90}. For a sequence $\{b_n\}_{n\ge 0}$, the Jensen polynomial of degree $d$ and shift $n$, denoted by $J^{d,n}_{b}(X)$, is defined as
\begin{align*}
	J^{d,n}_{b}(X)=\sum_{j=0}^{d}\binom{d}{j}b_{n+j}X^j.
\end{align*}
For $d=2$ and shift $n-1$, the Jensen polynomial $J^{2,n-1}_{b}(X)$ takes the form
\begin{align*}
	J^{2,n-1}_{b}(X)=b_{n-1}+2b_nX+b_{n+1}X^2.
\end{align*}
It follows immediately that the sequence $\{b_n\}_{n\ge 0}$ is log-concave at $n$ if and only if the polynomial $J^{2,n-1}_{b}(X)$ has only real zeros. More generally, the sequence $\{b_n\}_{n\ge 0}$ is said to satisfy the Tur\'{a}n inequality of order $d$ at $n$ if and only if the Jensen polynomial $J^{d,n-1}_{b}(X)$ is hyperbolic, i.e., all of its roots are real. The study of Jensen polynomials has emerged as a powerful tool for understanding the asymptotic behavior and analytic properties of arithmetic functions. For example, Chen, Jia, and Wang \cite{CJW2019} proved the hyperbolicity of the cubic Jensen polynomial $J^{3,n-1}_p(X)$ associated with the partition function $p(n)$ for all $n \ge 94$. They further conjectured that for every integer $d \ge 1$, there exists an integer $N_p(d)$ such that $J^{d,n-1}_{p}(X)$ is hyperbolic for all $n \ge N_p(d)$.
This conjecture was later established by Griffin, Ono, Rolen, and Zagier \cite[Theorem 5]{GORZ19}. Their work not only confirmed the conjecture for the partition function but also demonstrated the hyperbolicity of Jensen polynomials associated with Fourier coefficients of weakly holomorphic modular forms on $\rm{SL}_2(\mathbb{Z})$. Moreover, their results revealed a remarkable connection between Jensen polynomials and the Hermite polynomials $H_d(X)$. In particular, they showed that, under mild analytic assumptions on a positive sequence $\{a(n)\}$, suitably normalized Jensen polynomials converge to Hermite polynomials as $n \to \infty$.
The following theorem, due to Griffin, Ono, Rolen, and Zagier \cite{GORZ19}, provides a general criterion for the convergence of suitably normalized Jensen polynomials to Hermite polynomials.
\begin{theorem}\label{theorem_GORZ}\cite[Theorem 3 and 8]{GORZ19} 
Let $\{a(n)\}, \{b(n)\},$ and $ \{\delta(n)\}$ be sequences of positive real numbers and $\delta(n)\to0$ as $n\to\infty$.  
For integers $r \geq 0$,  $d \geq 1$, suppose that there exist real numbers $c_3(n), c_4(n), \ldots, c_d(n)$, for which 
\begin{align}\label{eq3.2}
\log \left( \frac{a(n+r)}{a(n)} \right)
= b(n)r - \delta(n)^2 r^2 + \sum_{j=3}^{d} c_j(n) r^j + o(\delta(n)^d),
\end{align}
as $n\to\infty$, with $c_j(n) = o(\delta(n)^j)$ for each $3 \leq j \leq d$.  
Then we have  
\begin{align*}
\lim_{n \to \infty} \left( \frac{\delta(n)^{-d}}{a(n)}
J_{a}^{d,n} \!\left( \frac{\delta(n)X - 1}{\exp(a(n))} \right) \right)
= H_d(X).
\end{align*}
\end{theorem}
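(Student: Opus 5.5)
The plan is to substitute the hypothesis \eqref{eq3.2} directly into the Jensen polynomial and read off its coefficients, using finite differences to isolate the Hermite coefficients. The degree $d$ is fixed and only the finitely many shifts $0\le j\le d$ appear, so every expansion is uniform in $j$. The displayed normalization should read $\exp(b(n))$ rather than $\exp(a(n))$, as in \cite{GORZ19}, and I will prove it in that form. Set $E_n(j):=\frac{a(n+j)}{a(n)}e^{-b(n)j}$. Then \eqref{eq3.2} gives
\begin{align*}
E_n(j)=\exp\Bigl(-\delta(n)^2 j^2+\sum_{i=3}^{d}c_i(n)j^i+o(\delta(n)^d)\Bigr).
\end{align*}
Substituting $X\mapsto(\delta(n)X-1)e^{-b(n)}$ into $J_a^{d,n}$ and dividing by $a(n)$ yields
\begin{align*}
\frac{1}{a(n)}J_a^{d,n}\!\left(\frac{\delta(n)X-1}{\exp(b(n))}\right)=\sum_{j=0}^{d}\binom{d}{j}E_n(j)\,(\delta(n)X-1)^j .
\end{align*}
Expanding $(\delta X-1)^j$ binomially and using $\binom{d}{j}\binom{j}{k}=\binom{d}{k}\binom{d-k}{j-k}$, the coefficient of $X^k$ becomes
\begin{align*}
\delta(n)^k\binom{d}{k}\sum_{j=k}^{d}\binom{d-k}{j-k}(-1)^{j-k}E_n(j)=\delta(n)^k\binom{d}{k}(\Delta^{d-k}E_n)(k),
\end{align*}
where $\Delta$ is the forward difference operator in $j$. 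The task is therefore to show $(\Delta^{m}E_n)(k)=\gamma_{m}\,\delta(n)^{m}+o(\delta(n)^{m})$ for $m=d-k$, with explicit constants $\gamma_m$.

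Next I expand $E_n(j)$ as a polynomial in $j$ plus an error. Since $\delta(n)\to0$ and $c_i(n)=o(\delta(n)^i)$, the exponent is $o(1)$ uniformly for $0\le j\le d$. Taylor expanding $\exp$ to sufficiently high order gives $E_n(j)=\sum_{D}\beta_D(n)j^D+o(\delta(n)^d)$. Here $\beta_D(n)$ collects products of the form $(\delta^2)^{s}\prod c_{i_t}$, with total degree $D=2s+\sum i_t$, so $\beta_D(n)=O(\delta(n)^D)$. Moreover, any monomial involving at least one $c_i$ is $o(\delta(n)^D)$.

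The operator $\Delta^{m}$ annihilates polynomials of degree $<m$, and on $j^D$ with $D\ge m$ it is bounded uniformly in $k\le d$. It also satisfies $\Delta^{m}j^{m}=m!$. Consequently:
\begin{itemize}
\item only monomials with $D\ge m$ survive;
\item the terms with $D>m$ are $O(\delta^{m+1})$;
\item the terms with $D=m$ that contain some $c_i$ are $o(\delta^m)$;
\item the error term contributes $o(\delta^d)\le o(\delta^m)$.
\end{itemize}
The only main contribution is therefore the pure term $\frac{(-\delta^2 j^2)^s}{s!}$ with $2s=m$. This gives $(\Delta^mE_n)(k)=\frac{(-1)^s(2s)!}{s!}\delta^{m}+o(\delta^m)$ when $m=2s$ is even, and $o(\delta^m)$ when $m$ is odd.

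Multiplying by $\delta^k\binom{d}{k}$ and then by $\delta^{-d}$, the coefficient of $X^{d-2s}$ tends to $\frac{(-1)^s d!}{s!(d-2s)!}$, and the coefficients of $X^{d-2s-1}$ tend to $0$. These are exactly the coefficients of the Hermite polynomial in the normalization of \cite{GORZ19}, namely $H_d(X)=\sum_{s}\frac{(-1)^s d!}{s!(d-2s)!}X^{d-2s}$, defined by $\sum_d H_d(X)\frac{t^d}{d!}=e^{-t^2+Xt}$. Convergence of all $d+1$ coefficients gives the stated limit.

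The main obstacle is the bookkeeping in the middle step. One must check that cross terms involving the $c_i(n)$, and the $o(\delta^d)$ remainder after exponentiation, never reach order $\delta^m$ after applying $\Delta^m$. The degree-versus-size matching (degree $D$ paired with size $O(\delta^D)$, strictly smaller when some $c_i$ is present) is designed to handle exactly this, and I would verify it carefully by truncating the exponential series at order $d$.
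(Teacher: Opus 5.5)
The paper gives no proof of this statement: it is quoted from \cite{GORZ19} and used only as a black box in the proof of Theorem~\ref{Theorem2}. Your argument is correct and is in substance the standard argument behind the result in \cite{GORZ19}. You divide out $e^{b(n)j}$, write $\frac{a(n+j)}{a(n)}e^{-b(n)j}$ as a polynomial in $j$ whose degree-$D$ coefficient is $O(\delta(n)^D)$ (and $o(\delta(n)^D)$ once some $c_i(n)$ enters), plus $o(\delta(n)^d)$. Then you use that an $m$-th difference annihilates degree $<m$ and sends $j^m$ to $m!$, so only the pure Gaussian term $(-\delta(n)^2j^2)^s/s!$ with $2s=d-k$ survives.

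You are also right that the argument of $J_a^{d,n}$ must be $(\delta(n)X-1)e^{-b(n)}$. With $\exp(a(n))$ as printed, the limit diverges whenever $a(n)-b(n)\to\infty$, because then the bracket tends to $1$ while $\delta(n)^{-d}\to\infty$.

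One harmless slip: you wrote
\begin{align*}
\sum_{j=k}^{d}\binom{d-k}{j-k}(-1)^{j-k}E_n(j)=(\Delta^{d-k}E_n)(k),
\end{align*}
but the left side actually equals $(-1)^{d-k}(\Delta^{d-k}E_n)(k)$. The sign only matters when $d-k$ is odd, and there the limit is $0$ anyway, so your conclusion is unaffected.
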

Since Hermite polynomials have distinct real zeros and real-rootedness is preserved under linear transformations, the theorem implies that the corresponding Jensen polynomials also have distinct real zeros for sufficiently large $n$. Consequently, the higher-order Tur\'{a}n inequalities hold for all sufficiently large $n$.
Over the years, log-concavity and higher-order Tur\'{a}n inequalities have been extensively studied for various partition functions; interested readers can see \cite{AGM25,ChenTalk2010,CJW2019,DeSalvo2015,Dimitrov,LW19,OPR22,Pandey24}. Recently, Peng, Zhang, and Zhong \cite{Helen25} established that when $2\leq \ell \leq 9$, the $\ell$-regular overpartition
function satisfies the conditions of log-concavity and adhere to the third-order Tur\'{a}n inequalities.
Motivated by their work, we prove the following theorem to show the existence of higher order Tur\'{a}n inequalities for the $\ell$-regular overpartition function $\overline{A}_{\ell}(n)$.
\begin{theorem}\label{Theorem2}
Let $\ell\geq3$ be a squarefree odd integer. For any positive integer $d$, $J^{d,n}_{\overline{A}_{\ell}}(X)$ is hyperbolic for all but finitely many values of $n$.
\end{theorem}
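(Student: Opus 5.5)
Our plan is to apply Theorem \ref{theorem_GORZ} (Griffin--Ono--Rolen--Zagier) to $a(n)=\overline{A}_{\ell}(n)$. We will obtain the required expansion \eqref{eq3.2} from the leading term of the exact formula in Theorem \ref{main_theorem}.

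\textbf{Isolating the dominant term.} For $k=1$ we have $d_1=\ell_1=1$, so $\delta_1=\frac{1}{24}\cdot 3\cdot(\ell-1)=\frac{\ell-1}{8}$. Set $\mu:=\pi\sqrt{1-\frac1\ell}$. The $k=1$, $m=0$ term then has Bessel argument $4\pi\sqrt{\frac{n(\ell-1)}{16\ell}}=\mu\sqrt n$, and $A_{1,0}(n)=a(0,1)W(0,1)$ is a nonzero constant, presumably $1$. We will show that every other term $(k,m)\neq(1,0)$ is exponentially smaller. Indeed, for odd $k\ge 3$ one has $\frac{\ell_k(\delta_k-m)}{\ell k^2}\le\frac{\ell_k\delta_k}{\ell k^2}$, and with $d_k=1$,
\[
\frac{\ell_k\delta_k}{\ell k^2}=\frac{\ell-\ell_k^2}{8\ell k^2}<\frac{\ell-1}{8\ell}.
\]
Terms with $m\ge1$ are smaller still. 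The tail over $k$ is controlled by $I_1(x)\ll x$ for small $x$, together with the trivial bound $|A_{k,m}(n)|\le k\cdot\max|a(m,k)|$, where the $a(m,k)$ are bounded uniformly because $m\le\delta_k\le\delta_1$. This gives
\[
\overline{A}_{\ell}(n)=C\,n^{-1/2}I_1(\mu\sqrt n)\bigl(1+O(e^{-c\sqrt n})\bigr)
\]
for some $c>0$. Using the asymptotic expansion $I_1(x)=\frac{e^x}{\sqrt{2\pi x}}\bigl(1+\sum_{j\ge1}\beta_j x^{-j}\bigr)$, we get, to any fixed order,
\[
\log \overline{A}_{\ell}(n)=\mu\sqrt n-\tfrac34\log n+\log C'+\sum_{j=1}^{J}\gamma_j n^{-j/2}+O(n^{-(J+1)/2}).
\]

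\textbf{Expanding the ratio.} Next we expand $\log\frac{a(n+r)}{a(n)}$ in powers of $r$ by Taylor expanding $\sqrt{n+r}$, $\log(n+r)$ and $(n+r)^{-j/2}$ around $n$; the exponentially small error contributes $o(n^{-N})$ for every $N$. This is exactly the computation carried out in GORZ for $p(n)$. The coefficient of $r$ is $b(n)=\frac{\mu}{2\sqrt n}-\frac{3}{4n}+\cdots$. The coefficient of $r^2$ is $-\frac{\mu}{8}n^{-3/2}+O(n^{-2})$, so we take
\[
\delta(n)=\sqrt{\tfrac{\mu}{8}}\,n^{-3/4}\bigl(1+O(n^{-1/4})\bigr)\to0.
\]
The coefficient $c_j(n)$ of $r^j$ has size $\asymp n^{1/2-j}$, and
\[
\frac{n^{1/2-j}}{n^{-3j/4}}=n^{1/2-j/4}\to0 \qquad\text{for } j\ge3,
\]
so $c_j(n)=o(\delta(n)^j)$ as required. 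The remainder is $O(n^{1/2-(d+1)})=o(n^{-3d/4})=o(\delta(n)^d)$. Hence \eqref{eq3.2} holds for every $d$, and Theorem \ref{theorem_GORZ} shows that the renormalized Jensen polynomials converge to $H_d(X)$. Since $H_d$ has $d$ distinct real roots and the renormalization is an affine change of variable with positive scaling, $J^{d,n}_{\overline{A}_{\ell}}(X)$ is hyperbolic for all sufficiently large $n$. That is precisely the claim.

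\textbf{Main obstacle.} The hard step is rigorously establishing that the $(1,0)$ term dominates uniformly. This means verifying $A_{1,0}(n)\neq0$ from the definition \eqref{a(m,k)} of $a(m,k)$, and bounding the infinite tail $\sum_{k\ge3}$ with an explicit exponential saving. We would bound the tail by
\[
\sum_k k^{-1/2}\,\sqrt{n}\,\exp\!\bigl(\mu_k\sqrt n\bigr), \qquad \mu_k\le \pi\sqrt{1-\tfrac{1}{\ell}}\cdot\max\Bigl(\tfrac13,\sqrt{\tfrac{\ell-9}{9(\ell-1)}}\Bigr),
\]
while for large $k$ we use $I_1(x)\le x e^{x}$ with $x\ll 1/k$, which makes the sum converge.
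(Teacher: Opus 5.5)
Your overall plan is the paper's: isolate the $(k,m)=(1,0)$ term of Theorem~\ref{main_theorem}, expand $\log\bigl(\overline{A}_{\ell}(n+r)/\overline{A}_{\ell}(n)\bigr)$ in powers of $r$, read off $b(n)$, $\delta(n)\asymp n^{-3/4}$ and $c_j(n)\asymp n^{1/2-j}$, and apply Theorem~\ref{theorem_GORZ}. Your Taylor bookkeeping is correct. Also $A_{1,0}(n)=a(0,1)W(0,1)=1$, since $s(0,1)=0$. You rightly flagged the tail as the hard step, but the method you propose for it fails.

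Put $x_k=\frac{4\pi}{k}\sqrt{\ell_k n(\delta_k-m)/(2\ell)}\asymp\sqrt n/k$. Using $I_1(x)\le\frac{x}{2}e^{x}$ for all $x\ge 0$, the prefactor $\sqrt{\ell_k^2(\delta_k-m)/(2\ell^2n)}$ cancels the $\sqrt n$ in $x_k$. So the $(k,m)$ term is of size $|A_{k,m}(n)|\,k^{-2}e^{x_k}$, and this is sharp for $k\gg\sqrt n$ because $I_1(x)\sim x/2$ as $x\to0$. Your trivial bound $|A_{k,m}(n)|\le k\max|a(m,k)|$ therefore gives only $O(1/k)$ per term. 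Since $\delta_k=\frac{\ell-1}{8}>0$ for every odd $k$ coprime to $\ell$, infinitely many such terms occur, so you are bounding the tail by a harmonic series. Your displayed majorant $\sum_k k^{-1/2}\sqrt n\,e^{\mu_k\sqrt n}$ also diverges, since $\mu_k\to0$. The assertion that $I_1(x)\le xe^x$ with $x\ll 1/k$ ``makes the sum converge'' is false. This is the weight-zero phenomenon: with $I_1$ instead of the $I_{3/2}$ in Rademacher's series for $p(n)$, the trivial bound on the exponential sums is exactly borderline. You get neither convergence nor a bound uniform in $n$.

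What is missing is a nontrivial estimate $A_{k,m}(n)\ll n^{C}k^{1-\eta}$ whose dependence on $n$ is at most polynomial. The paper's machinery supplies it. In the notation of the proof of Lemma~\ref{lemma_Phi}, $A_{k,m}(n)=a(m,k)\phi(k,m)$ with $j=0$. The Weil bound used there gives $\phi(k,m)\ll_\varepsilon (Qk)^{1/2+\varepsilon}\gcd(\alpha,\beta,Qk)^{1/2}$, with $Q\le 48\ell$ and $\gcd(\alpha,\beta,Qk)\mid 48\ell n$ for odd $k$. Hence $A_{k,m}(n)\ll_{\ell,\varepsilon}n^{1/2}k^{1/2+\varepsilon}$.

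For odd $k\ge3$ one has $x_k^2\le \pi^2 n(\ell-\ell_k^2)/(\ell k^2)\le \mu^2 n/k^2$, where $\mu=\pi\sqrt{1-1/\ell}$. The finitely many terms with $k=1$, $m\ge1$ have argument at most $\pi\sqrt{n(1-9/\ell)}$. So the whole remainder is $\ll n^{1/2}\exp\bigl(\max(\mu/3,\pi\sqrt{1-9/\ell})\sqrt n\bigr)$. This is an exponentially small fraction of the main term, which is $\asymp n^{-3/4}e^{\mu\sqrt n}$.

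With this repair your argument is complete, and it is more careful than the paper's. The paper takes logarithms of the asymptotic equivalence in Corollary~\ref{cor_1}, which controls the log-ratio only up to $o(1)$. Hypothesis \eqref{eq3.2} needs an error $o(\delta(n)^d)=o(n^{-3d/4})$, and that is exactly what your exponentially small remainder plus the full asymptotic expansion of $I_1$ provides.
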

To prove these results, we make use of the classical circle method. In Section \ref{Sec2}, we provide the necessary preliminaries required for the proofs. We prove Theorem \ref{main_theorem} in Section \ref{Sec3}. In Section \ref{Sec4}, we deduce Corollary \ref{cor_1} and Theorem \ref{Theorem2} from Theorem \ref{main_theorem}.

\section*{Acknowledgements}
\noindent The second author thanks the INSPIRE faculty research grant and ANRF ECRG (IFA-24MA 204 and ANRF/ECRG/2025/009350/PMS). The authors thank Professor Rupam Barman for reading the initial version of the manuscript and many helpful suggestions. 

\section{Nuts and Bolts}\label{Sec2}
In this section, we present some brief details of Farey fractions and Ford circles, and we state some important results that are useful in the proof of Theorem \ref{main_theorem}.
\subsection{Farey fractions and Ford circles}
In this subsection, we briefly discuss Farey sequences and Ford circles, which play a fundamental role in the circle method. For more on this topic, we refer \cite{Apostal1990} to the reader. For a natural number $N$, the Farey sequence of order $N$, denoted by $F_{N}$ is the sequence of all reduced fractions in the interval $[0,1]$ whose denominators do not exceed $N,$ arranged in increasing order. Thus,
\begin{align*} 
F_{N}:=\bigg\{{\frac{h}{k}\in\mathbb{Q}\cap [0,1]}\bigg\vert \,0\leq h\leq k\leq N,\  \gcd(h,k)=1\bigg\}. 
\end{align*} For each fraction $\frac{h}{k}$ in $F_{N}$, the corresponding Ford circle denoted by $C(h,k)$ is defined by 
$$C(h,k):=\bigg\{ \zeta\in \mathbb{C}:\bigg\vert\,\zeta-\left(\frac{h}{k}+\frac{i}{2k^2}\right)\bigg\vert=\frac{1}{2k^2}\bigg\}.$$ A straightforward computation shows that distinct ford circles never intersect. Furthermore, if $\frac{h_{1}}{k_{1}}<\frac{h}{k}<\frac{h_{2}}{k_{2}}$ are consecutive fractions in a Farey sequence, then $C(h,k)$ is tangent to both $C(h_{1},k_{1})$ and $C(h_{2},k_{2})$. The corresponding points of tangency are
\begin{align}\label{eq_Pt_of_ten}
\tau_1(h,k):=\frac{h}{k}-\frac{k_{1}}{k(k^2+k_{1}^{2})}+\frac{i}{k^2+k_{1}^{2}}\ \text{ and }\ \tau_{2}(h,k):=\frac{h}{k}+\frac{k_{2}}{k(k^{2}+k_{2}^{2})}+\frac{i}{k^{2}+k_{2}^{2}}.
\end{align}
Following Rademacher's refinement of the circle method, for the path of integration, we choose a contour consisting of arcs of Ford circles associated with the Farey sequence of order $N$. By decomposition the contour into combinations arising from individual Farey fractions, one can obtain that this decomposition isolates the contribution from each rational cusp and facilitates the analysis of the generating function in its vicinity. Consequently, global integrals can be expressed as a sum of local contributions, leading to an exact formula for $\overline{A}_{\ell}(n).$ 
\subsection{Transformation Formula}
In this subsection, we establish the transformation formula for $F(\alpha\tau)$ for $\alpha\in\{1,2,\ell,2\ell\}$ and $\tau=\frac{h}{k}+\frac{iz}{k^2}$, with $\frac{h}{k}\in\mathbb{Q}$ and $\Re(z)>0$. This will be used to find a transformation formula for $\overline{A}_{\ell}\left(\exp\left(2\pi i\left(\frac{h}{k}+\frac{iz}{k^2}\right)\right)\right)$.
Recall the Dedekind eta function
\begin{align*}
\eta(\tau)=e^{\pi i\tau/12}\prod_{j=0}^{\infty}\left(1-e^{2\pi ij\tau}\right),\ \ \tau\in\mathbb{H}\,,
\end{align*}
and its transformation formula \cite[Theorem 3.4]{Apostal1990}
\begin{align}\label{eq_eta_TF}
\eta\left(\frac{a\tau+b}{c\tau+d}\right)=\exp\left(\frac{\pi i}{12}\left(\frac{a+d}{c}+12s(-d,c)\right)
\right)\sqrt{\frac{c\tau+d}{i}}\eta(\tau),
\end{align}
for any $\tau\in\mathbb{H}$ and $\begin{bmatrix}
a & b \\ c & d 
\end{bmatrix}\in \text{SL}_2(\mathbb{Z})$, with $c>0$.
Here, $s(h,k)$ is the Dedekind sum defined in \eqref{eq_Dedekind_sum}. Note that
\begin{align}\label{eq_F_eta}
F(\tau)=e^{\pi i\tau/12}\eta^{-1}(\tau).
\end{align}
Substituting $\eta^{-1}(\tau)$ from \eqref{eq_eta_TF} in \eqref{eq_F_eta}, we get
\begin{align}\label{eq3.a}
F(\tau)&= \exp\left(\frac{\pi i}{12}\left(\tau+\frac{a+d}{c}+12s(-d,c)\right)\right)\sqrt{\frac{c\tau+d}{i}}\eta^{-1}\left(\frac{a\tau+b}{c\tau+d}\right)\nonumber\\
&=\exp\left(\frac{\pi i}{12}\left(\tau-\frac{a\tau+b}{c\tau+d}+\frac{a+d}{c}+12s(-d,c)\right)\right)\sqrt{\frac{c\tau+d}{i}}F\left(\frac{a\tau+b}{c\tau+d}\right).
\end{align}
For a positive integer $k$ and $\alpha\in\{1,2,\ell,2\ell\}$, let $\alpha_k:=\gcd(\alpha,k)$. Thus, $$(\alpha,\alpha_k)\in\{(1,1),(2,d_k),(\ell,\ell_k),(2\ell,d_k\ell_k)\}.$$ Here $\gcd(2\ell,k)=d_k\ell_k$ as $\ell$ is a squarefree odd integer. For a transformation formula of $F(\alpha\tau)$, we replace $\tau$ in \eqref{eq3.a} with $\alpha\left(\frac{h}{k}+\frac{iz}{k^2}\right)$, and set $a=H\left(\frac{h\alpha}{\alpha_k},\frac{k}{\alpha_k}\right)$, $c=\frac{k}{\alpha_k}$, $d=-\frac{h\alpha}{\alpha_k}$, and $b=\frac{ad-1}{c}$ to get
\begin{align}\label{eq3.b}
F\left(\alpha\left(\frac{h}{k}+\frac{iz}{k^2}\right)\right)=&\exp\left(\frac{\pi\alpha_k^2}{12\alpha z}-\frac{\pi\alpha z}{12k^2}+\pi i s\left(\frac{h\alpha}{\alpha_k},\frac{k}{\alpha_k}\right)\right)
\sqrt{\frac{\alpha z}{k\alpha_k}}\\
&\times F\left(\frac{H\left(\frac{h\alpha}{\alpha_k},\frac{k}{\alpha_k}\right)}{k/\alpha_k}+\frac{i\alpha_k^2}{\alpha z}\right).\nonumber
\end{align}
Note that $\tau=\alpha\left(\frac{h}{k}+\frac{iz}{k^2}\right)\in\mathbb{H}$ as $\Re(z)>0$, and $\begin{bmatrix}
a & b \\ c & d 
\end{bmatrix}\in \text{SL}_2(\mathbb{Z})$ with $c>0$.
Substituting $\tau$ with $\frac{h}{k}+\frac{iz}{k^2}$ in \eqref{eq_gf_A} and using \eqref{eq3.b} for $\alpha\in\{1,2,\ell,2\ell\}$, we get
\begin{align}\label{eq3.c}
\overline{A}_{\ell}\left(\exp\left(2\pi i\left(\frac{h}{k}+\frac{iz}{k^2}\right)\right)\right)=&\frac{F^{2}\left(\frac{h}{k}+\frac{iz}{k^2}\right)F\left(2\ell\left(\frac{h}{k}+\frac{iz}{k^2}\right)\right)}{F\left(2\left(\frac{h}{k}+\frac{iz}{k^2}\right)\right)F^2\left(\ell\left(\frac{h}{k}+\frac{iz}{k^2}\right)\right)}\nonumber\\
=&W(h,k)\sqrt{\frac{\ell_k}{\ell}}\exp\left(\frac{\pi d_k\ell_k\delta_k}{\ell z}\right)\\
&\times
\frac{F^2\left(\frac{H\left(h,k\right)}{k}+\frac{i}{z}\right)F\left(\frac{H\left(\frac{2\ell h}{d_k\ell_k},\frac{k}{d_k\ell_k}\right)}{k/(d_k\ell_k)}+\frac{id_k^2\ell_k^2}{2\ell z}\right)}{F\left(\frac{H\left(\frac{2h}{d_k},\frac{k}{d_k}\right)}{k/d_k}+\frac{id_k^2}{2z}\right)F^2\left(\frac{H\left(\frac{\ell h}{\ell_k},\frac{k}{\ell_k}\right)}{k/\ell_k}+\frac{i\ell_k^2}{\ell z}\right)},\nonumber
\end{align}
where $\delta_k:=\frac{1}{24}\left(\frac{4}{d_k}-d_k\right)\left(\frac{\ell}{\ell_k}-\ell_k\right)$, and
\begin{align}\label{eq3.d}
W(h,k):=\frac{\omega^2\left(h,k\right)\omega\left(\frac{2\ell h}{d_k\ell_k},\frac{k}{d_k\ell_k}\right)}{\omega\left(\frac{2 h}{d_k},\frac{k}{d_k}\right)\omega^2\left(\frac{\ell h}{\ell_k},\frac{k}{\ell_k}\right)}.
\end{align}
\par Next, we deal with the terms $F\left(\frac{\alpha_k}{k}H\left(\frac{\alpha h}{\alpha_k},\frac{k}{\alpha_k}\right)+\frac{i\alpha_k^2}{\alpha z}\right)$ on the R.H.S of \eqref{eq3.c}. Suppose $\gcd(h,k)=1$. From
\begin{align*}
\frac{\alpha h}{\alpha_k}H\left(\frac{\alpha h}{\alpha_k},\frac{k}{\alpha_k}\right)\equiv-1
\pmod[\bigg]{\frac{k}{\alpha_k}}
\end{align*}
and 
\begin{align}\label{eq3e}
hH(h,k)\equiv-1\pmod{k},
\end{align}
we get
\begin{align}\label{eq3e1}
\alpha H\left(\frac{\alpha h}{\alpha_k},\frac{k}{\alpha_k}\right)\equiv\alpha_k H(h,k)\pmod{k}.
\end{align}
For any $\alpha\in\{1,2,\ell,2\ell\}$, we have $\gcd\left(\frac{\alpha}{\alpha_k},k\right)=1$, as $\ell$ is a squarefree odd integer. This implies that $\gcd\left(\frac{\alpha h}{\alpha_k},k\right)=1$, which further implies that the congruence
\begin{align}\label{eq3f}
h\left(\frac{\alpha}{\alpha_k}H'(h,k)\right)\equiv-1\pmod{k}
\end{align}
has a solution. From \eqref{eq3e} and \eqref{eq3f}, we observe that $H(h,k)$ can be taken as a multiple of $\frac{\alpha}{\alpha_k}$. In that case, we have
\begin{align}\label{eq3g}
\alpha H\left(\frac{\alpha h}{\alpha_k},\frac{k}{\alpha_k}\right)\equiv\alpha_k H(h,k)\pmod{\alpha}.
\end{align}
From \eqref{eq3e1}, \eqref{eq3g}, the fact that $\gcd\left(\frac{\alpha}{\alpha_k},k\right)=1$, and the \emph{Chinese Remainder Theorem,} we obtain
\begin{align}\label{eq3h}
\alpha H\left(\frac{\alpha h}{\alpha_k},\frac{k}{\alpha_k}\right)\equiv\alpha_k H(h,k)\pmod[\bigg]{\frac{\alpha k}{\alpha_k}}.
\end{align}
It follows from \eqref{eq3h} and the periodicity of $F(\tau)$, i.e., $F(\tau+1)=F(\tau)$ that 
\begin{align}\label{eq3i}
F\left(\frac{\alpha_k}{k}H\left(\frac{\alpha h}{\alpha_k},\frac{k}{\alpha_k}\right)+\frac{i\alpha_k^2}{\alpha z}\right)=F\left(\frac{\alpha_k^2H(h,k)}{\alpha k}+\frac{i\alpha_k^2}{\alpha z}\right).
\end{align}
If we set
\begin{align*}
\nu_z:=\frac{d_k\ell_k}{2\ell}\left(\frac{H(h,k)}{k}+\frac{i}{z}\right),
\end{align*}
then, with the application of \eqref{eq3i} for $(\alpha,\alpha_k)\in\{(1,1),(2,d_k),(\ell,\ell_k),(2\ell,d_k\ell_k)\}$, the
last factor on the R.H.S. of \eqref{eq3.c} becomes
\begin{align}\label{eq3j}
\frac{F^2\left(\frac{2\ell}{d_k\ell_k}\nu_z\right)F\left(d_k\ell_k\nu_z\right)}{F\left(\frac{d_k\ell}{\ell_k}\nu_z\right)F^2\left(\frac{2\ell_k}{d_k}\nu_z\right)}=:\mathcal{A}_k(\nu_z).
\end{align}
Here, we write
\begin{align}\label{eq3k}
\mathcal{A}_k(\tau)=\sum_{m=0}^{\infty}a(m,k)e^{2\pi im\tau},
\end{align}
for some coefficients $a(m,k)$. With this, \eqref{eq3.c} takes the following form
\begin{align}\label{eq3l}
\overline{A}_{\ell}\left(\exp\left(2\pi i\left(\frac{h}{k}+\frac{iz}{k^2}\right)\right)\right)=W(h,k)\sqrt{\frac{\ell_k}{\ell}}\exp\left(\frac{\pi d_k\ell_k\delta_k}{\ell z}\right)\mathcal{A}_k(\nu_z).
\end{align}
\subsection{Estimate for the coefficients $a(m,k)$} Here we give an estimate for the growth rate of the coefficients $a(m,k)$ appearing in \eqref{eq3k}. Before moving further, we recall the Euler's pentagonal number theorem \cite[Corollary 1.7]{And1998}
\begin{align}\label{eq_epnt}
(q;q)_{\infty}=\sum_{j=-\infty}^{\infty}(-1)^jq^{j(3j-1)/2}.
\end{align}

From \eqref{eq3j} and \eqref{eq3k}, we arrive at the following:
\begin{align}\label{a(m,k)}
\sum_{m=0}^{\infty}a(m,k)q^m=\frac{(q^{s};q^{s})_{\infty}(q^{t};q^{t})_{\infty}^{2}}{(q^u;q^u)_{\infty}^{2}(q^{v};q^{v})_{\infty}},
\end{align}
where $s=\frac{d_k\ell}{\ell_k}$, $t=\frac{2\ell_k}{d_k}$, $u=\frac{2\ell}{d_k\ell_k}$, and $v=d_k\ell_k$. Here, for simplicity, we abuse the notation by taking $q=e^{2\pi i\nu_z}$.
From the generating function of $p(n)$, we get
\begin{align*}
\frac{1}{(q^r;q^r)_{\infty}}=\sum_{n=0}^{\infty}p(n)q^{rn},
\end{align*}
which suggests that the coefficient $p_{uuv}(n)$ in 
\begin{align*}
\frac{1}{(q^u;q^u)_{\infty}^2(q^v;q^v)_{\infty}}=:\sum_{n=0}^{\infty}p_{uuv}(n)q^n,
\end{align*}
have the following estimate: $p_{uuv}(n)\ll (m+1)^2(p(m))^3$. It is easy to see from \eqref{eq_epnt} that the coefficients $p_{stt}(n)$ in 
\begin{align*}
(q^s;q^s)_{\infty}(q^t;q^t)_{\infty}^2=:\sum_{n=0}^{\infty}p_{stt}(n)q^n,
\end{align*}
have the following estimate: $p_{stt}(n)\ll \sqrt{n}$. Finally, using \eqref{HR-Asym}, we conclude that
\begin{align*}
a(m,k)\ll m\cdot |p_{stt}(m)|\cdot |p_{uuv}(m)|\ll m\cdot m^2 (p(m))^3\cdot \sqrt{m}\ll \sqrt{m}e^{3\pi\sqrt{2m/3}}\ll e^{3\pi\sqrt{m}}.
\end{align*}
Therefore, 
\begin{align}\label{eq_a_m,k_bound}
a(m,k)\ll e^{3\pi\sqrt{m}},
\end{align}
where the implicit constant does not depend on $k$.
\subsection{An expression for $W(h,k)$:} In this subsection, we calculate $W(h,k)$ arising in the transformation formula for $\overline{A}_{\ell}\left(\exp\left(2\pi i\left(\frac{h}{k}+\frac{iz}{k^2}\right)\right)\right)$ (see, \eqref{eq3l}) defined by \eqref{eq3.d}.
\par For a positive integer $k$ and $\alpha\in\{1,2,\ell,2\ell\}$, let $\alpha_k:=\gcd(\alpha,k)$. Let
\begin{equation}\label{eq3a0}
\omega_{\alpha}(h,k):=\exp\left(\frac{-2\pi i(k^{2}-\alpha_k^{2})}{24k\alpha_k^3}\left(2h\alpha\alpha_k+(\alpha^{2}h^{2}-\alpha_k^{2})H\left(\frac{\alpha h}{\alpha_k},\frac{k}{\alpha_k}\right)\right)\right)
\end{equation} 
and
\begin{align}\label{very_new_eq}
W^*(h,k):=\frac{\omega_{1}^{2}(h,k)\omega_{2\ell}(h,k)}{\omega_{2}(h,k)\omega_{\ell}^{2}(h,k)}.
\end{align}
From \cite[Eq. (2.3) and (2.4)]{Niven1940}, we have
\begin{equation}\label{eq3a1}
\omega\left(\frac{\alpha h}{\alpha_k},\frac{k}{\alpha_k}\right)=
\begin{cases}
\left(\frac{-h\alpha/\alpha_k}{k/\alpha_k}\right)\exp{\left(\frac{-\pi i}{4}(k-1)\right)}\omega_{\alpha}(h,k), & \text{if } k \text{ is odd},\\\\
\left(\frac{-k/\alpha_k}{h\alpha/\alpha_k}\right)\exp{\left(\frac{-\pi i}{4}\left(2-\frac{h\alpha}{\alpha_k^{2}}(k+\alpha_k)\right)\right)}\omega_{\alpha}(h,k),& \text{if } k \text{ is even}.
\end{cases}
\end{equation}
Here, $\left(\frac{\cdot}{\cdot}\right)$ is the Jacobi symbol.
Then using \eqref{eq3a1} in \eqref{eq3.d}, we obtain
\begin{align}\label{eq3a5}
W(h,k)=
\begin{cases}       
W^*(h,k)\left(\frac{\ell/\ell_{k}}{k/\ell_{k}}\right)\left(\frac{-2h}{\ell_{k}}\right), &  \text{ if }k\text{ is odd},\\\\
W^*(h,k)\left(\frac{-k/2\ell_{k}}{\ell/\ell_{k}}\right)\left(\frac{\ell_{k}}{h}\right)\exp{\left(\frac{-\pi ih}{4}\left(\frac{3k}{2}\left(\frac{\ell}{\ell_{k}^{2}}-1\right)+\left(\frac{\ell}{\ell_{k}}-1\right)\right)\right)}, &  \text{ if }k\text{ is even}.
\end{cases}
\end{align}  Now, set $24\cdot2\ell=PQ$, where $P$ and $Q$ are taken in such a way that $P$ is the largest divisor of $24\cdot 2 \ell$ with $\gcd(P,k)=1$. Also, consider $P'$ such that $P'P\equiv 1\pmod {Qk}$. Since $P$ is the largest divisor that doesn't have a prime factor of $k$, if a prime $p$ divides $Q$ then $p$ also divides $k$. This implies that $\gcd(h,k)=1$ if and only if $\gcd(h,Qk)=1$. It follows that there exists $H(h,Qk)$ such that $P$ divides $H(h,Qk)$, i.e., $hH(h,Qk)\equiv -1 \pmod {Qk}$ and $P\mid H(h,Qk)$ (similar to the argument followed by \eqref{eq3e1} since $\gcd(Ph,Qk)=1$). Note that $\gcd\left(\frac{h\alpha}{\alpha_k},Qk\right)=1$. Therefore, there exists $H\left(\frac{h\alpha}{\alpha_k},\frac{Qk}{\alpha_k}\right)$ that is divisible by $P$. Next, from
\begin{align*}
\frac{\alpha h}{\alpha_k}H\left(\frac{\alpha h}{\alpha_k},\frac{Qk}{\alpha_k}\right)\equiv-1\pmod[\bigg]{\frac{Qk}{\alpha_k}}
\end{align*}
and
\begin{align*}
hH\left(h,Qk\right)\equiv-1\pmod[\bigg]{\frac{Qk}{\alpha_k}},
\end{align*}
we have
\begin{align}\label{eq3b1}
\frac{1}{P}H\left(\frac{\alpha h}{\alpha_k},\frac{Qk}{\alpha_k}\right)\equiv\frac{P'\alpha_k}{\alpha}H\left(h,Qk\right)\ \pmod[\bigg]{\frac{Qk}{\alpha_k}}.
\end{align}
Here both the L.H.S. and the R.H.S. of \eqref{eq3b1} are integers as $\frac{\alpha}{\alpha_k}\mid P$, $P\mid H(h,Qk)$, and $P\mid H\left(\frac{h\alpha}{\alpha_k},\frac{Qk}{\alpha_k}\right)$. Making use of \eqref{eq3b1}, and the facts that $24=PQ/(2\ell)$, $\alpha_k\mid 2\ell$ and $\alpha_k^2\mid (k^2-\alpha_k^2)$ in \eqref{eq3a0} yields
\begin{align*}
\omega_{\alpha}(h,k)=&\exp\left(\frac{-\pi i(k^{2}-\alpha_k^{2})h\alpha}{6k\alpha_k^2}\right)\exp\left(\frac{-2\pi i}{24k\alpha_k^3}(k^{2}-\alpha_k^{2})(\alpha^{2}h^{2}-\alpha_k^{2})H\left(\frac{\alpha h}{\alpha_k},\frac{k}{\alpha_k}\right)\right)\\
=&\exp\left(\frac{-\pi i(k^{2}-\alpha_k^{2})h\alpha}{6k\alpha_k^2}\right)\\
&\times\exp\left(-2\pi i\left(\frac{1}{Qk/\alpha_k}\cdot\frac{2\ell}{\alpha_k}\cdot\frac{(k^2-\alpha_k^2)(\alpha^2h^2-\alpha_k^2)}{\alpha_k^2}\cdot\frac{1}{P\alpha_k}H\left(\frac{\alpha h}{\alpha_k},\frac{k}{\alpha_k}\right)\right)\right)\\
=&\exp\left(\frac{-\pi i(k^{2}-\alpha_k^{2})h\alpha}{6k\alpha_k^2}\right)\exp\left(\frac{-4\pi i\ell P'}{Qk\alpha\alpha_k^2}(k^2-\alpha_k^2)(\alpha^2h^2-\alpha_k^2)H\left(h,Qk\right)\right)\\
=&\exp\left(\frac{-\pi i(k^{2}-\alpha_k^{2})h\alpha}{6k\alpha_k^2}\right)\exp\left(-4\pi i\ell\alpha P'\cdot\frac{(k^2-\alpha_k^2)}{\alpha_k^2}\cdot\frac{1}{Qk} \left(h^2H\left(h,Qk\right)\right)\right)\\
&\times\exp\left(\frac{4\pi i\ell P'}{Qk\alpha}(k^2-\alpha_k^2)H\left(h,Qk\right)\right),
\end{align*}
where the second exponential term is equal to 1, because $\alpha_k^2\mid (k^2-\alpha_k^2)$ and $h^2H(h,Qk)\equiv-h\pmod{Qk}$. Therefore,
\begin{align}\label{eq3a3}
\omega_{\alpha}(h,k)
=\exp\left(\frac{-\pi i(k^{2}-\alpha_k^{2})h\alpha}{6k\alpha_k^2}\right)
\exp\left(\frac{4\pi i\ell P'}{Qk\alpha}(k^2-\alpha_k^2)H\left(h,Qk\right)\right).
\end{align}
Employing \eqref{eq3a3} for $\alpha\in\{1,2,\ell,2\ell\}$ in \eqref{very_new_eq}, we get
\begin{align}\label{eq3a4}
W^*(h,k)=\rho(h,k)\exp\left(\frac{-\pi ihk}{3}\left(1+\frac{\ell}{d_k^2\ell_k^2}-\frac{1}{d_k^2}-\frac{\ell}{\ell_k^2}\right)\right),
\end{align}
where
\begin{align*}
\rho(h,k)=\exp\left(\frac{48\pi iP'H(h,Qk)}{Qk}\left(\frac{k^2(\ell-1)}{8}-d_k\ell_k\delta_k\right)\right).  
\end{align*}
Finally, substituting \eqref{eq3a4} into \eqref{eq3a5}, we get the required expression for $W(h,k)$:
\begin{align}\label{eq3a6}
W(h,k)=
\begin{cases}       
\left(\frac{\ell/\ell_{k}}{k/\ell_{k}}\right)\left(\frac{-2h}{\ell_{k}}\right)\rho(h,k), &  \text{if }k\text{ is odd},\\\\
\left(\frac{-k/2\ell_{k}}{\ell/\ell_{k}}\right)\left(\frac{\ell_{k}}{h}\right)\exp{\left(\frac{-\pi ih}{4}\left(\frac{k}{2}\left(\frac{\ell}{\ell_{k}^{2}}-1\right)+\left(\frac{\ell}{\ell_{k}}-1\right)\right)\right)}\rho(h,k), &  \text{if }k\text{ is even}.
\end{cases}
\end{align}
\subsection{Other required results} In this subsection, we recall some results that are important for deriving the Rademacher-type formula and to prove Tur\'{a}n inequalities of higher order for $\overline{A}_{\ell}(n)$.
\par For integers $\alpha$, $\beta$, and $\gamma$ with $\gamma>0$, the Kloosterman sum is defined by
\begin{align}\label{eq_KS}
\mathcal{K}(\alpha,\beta;\gamma):=\sum_{\substack{1\leq j\leq\gamma\\ \gcd(j,\gamma)=1}}\exp{\left(\frac{2\pi i(\alpha j+\beta j')}{\gamma}\right)},
\end{align}
where $j'$ is the multiplicative inverse of $j$ modulo $\gamma$. The Weil's bound for Kloosterman sum is given in the following lemma (see, \cite[Corollary 11.12]{IK2004}).
\begin{lemma}\label{lemma_Weil}
We have
\begin{align*}
\mathcal{K}(\alpha,\beta;\gamma)\ll\left(\gcd (\alpha,\beta,\gamma)\right)^{1/2}\gamma^{1/2+\varepsilon}.
\end{align*}
\end{lemma}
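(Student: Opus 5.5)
The statement in question is Lemma~\ref{lemma_Weil}, Weil's bound for Kloosterman sums. It is quoted from \cite[Corollary 11.12]{IK2004} and needs no new proof here. Still, this is how I would derive it.

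\emph{Reduction to prime powers.} Write $\gamma=\prod p^{a}$. By the Chinese Remainder Theorem the sum is twisted multiplicative: if $\gamma=\gamma_1\gamma_2$ with $\gcd(\gamma_1,\gamma_2)=1$ and $\overline{\gamma_i}$ denotes the inverse of $\gamma_i$ modulo the other factor, then
\[
\mathcal{K}(\alpha,\beta;\gamma)=\mathcal{K}(\alpha\overline{\gamma_2},\beta\overline{\gamma_2};\gamma_1)\,\mathcal{K}(\alpha\overline{\gamma_1},\beta\overline{\gamma_1};\gamma_2).
\]
So it suffices to bound each prime-power sum $\mathcal{K}(\alpha,\beta;p^a)$ by $C\,(\gcd(\alpha,\beta,p^a))^{1/2}p^{a/2}$ with an absolute constant $C$, say $C\le 2$ up to bounded exceptions at small primes. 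The product of these constants over the $\omega(\gamma)$ prime factors is at most $\tau(\gamma)\ll\gamma^{\varepsilon}$, which is the source of the $\gamma^{\varepsilon}$.

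\emph{Prime modulus.} For $a=1$ and $p\nmid\alpha\beta$, the bound $|\mathcal{K}|\le 2\sqrt p$ is Weil's theorem. One proof views $\mathcal{K}$ as minus the trace of Frobenius on $H^1$ of an Artin--Schreier curve (a rank-two Kloosterman sheaf) and invokes the Riemann hypothesis for curves. A more elementary route is Stepanov's method. I would not reprove this deep input; it is the genuine obstacle. If $p$ divides exactly one of $\alpha,\beta$, the sum is a Ramanujan sum equal to $-1$. If $p$ divides both, it equals $p-1\le p=\gcd\cdot p^{0}$, consistent with the bound.

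\emph{Higher prime powers.} For $a\ge2$, write $j=x(1+p^{\lceil a/2\rceil}y)$ and expand the inverse as $j'\equiv x'(1-p^{\lceil a/2\rceil}y)$ modulo $p^a$. The sum over $y$ is a complete linear exponential sum. It vanishes unless $\alpha x\equiv\beta x'\pmod{p^{\lfloor a/2\rfloor}}$, a quadratic congruence with at most two solutions (more if $p$ divides $\gcd(\alpha,\beta)$). When $a$ is odd, this leaves a quadratic Gauss sum, which is handled by stationary phase. The outcome is $|\mathcal{K}|\le 2p^{a/2}$ when $p\nmid\gcd(\alpha,\beta)$. Common factors of $p$ in $\alpha,\beta$ are pulled out by reducing the modulus, which produces the factor $\gcd(\alpha,\beta,p^a)^{1/2}$. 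The case $p=2$ and the bookkeeping with gcds are routine.

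Multiplying the prime-power bounds gives the stated estimate.
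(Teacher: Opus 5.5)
Your proposal is correct and agrees with the paper, which gives no proof of its own and simply invokes Corollary 11.12 of Iwaniec--Kowalski. Your sketch is the standard argument behind that citation: twisted multiplicativity, Weil's bound $2\sqrt{p}$ at prime moduli, the elementary stationary-phase evaluation at prime powers, and the reduction $\mathcal{K}(p^c\alpha',p^c\beta';p^a)=p^c\,\mathcal{K}(\alpha',\beta';p^{a-c})$ for $c<a$ to extract common factors, which yields $|\mathcal{K}(\alpha,\beta;\gamma)|\le\tau(\gamma)\gcd(\alpha,\beta,\gamma)^{1/2}\gamma^{1/2}$ up to a bounded factor from $p=2$. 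One cosmetic slip: in the case $p\mid\gcd(\alpha,\beta)$ with $a=1$, the comparison should read $p-1\le p=\gcd(\alpha,\beta,p)^{1/2}p^{1/2}$.
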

For $s>0$ and $\Re(\nu)>0$, the modified Bessel function of the first kind has the following integral representation:
\begin{align}\label{eq_BF_1}
I_{\nu}(\zeta)=\frac{(\zeta/2)^{\nu}}{2\pi i}\int_{s-i\infty}^{s+i\infty}u^{-\nu-1}\exp\left(u+\frac{\zeta^{2}}{4u}\right)du.
\end{align}
Next, we state an asymptotic result for $I_{\nu}(x)$, see, for example, \cite{Watson44}.
\begin{lemma}
As $x\rightarrow\infty$, we have
\begin{align}\label{eq_I_asym}
I_{\nu}(x)\sim\frac{e^x}{\sqrt{2\pi x}}.
\end{align}
\end{lemma}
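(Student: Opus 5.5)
We prove the lemma with Laplace's (saddle point) method applied to the contour integral \eqref{eq_BF_1}. This keeps the argument inside the paper's toolkit. We assume $\nu$ real and positive, which covers the case $\nu=1$ needed later. For general $\Re(\nu)>0$ the same argument works after replacing $\nu$ by $\Re(\nu)$ in the power of $(1+t^2)$ below; the factor $(x/2)^{\nu}$ is cancelled exactly, so no modulus issue arises.

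\emph{Rescaling.} Put $\zeta=x$ and change variables $u=\frac{x}{2}w$, taking the line $\Re(u)=s=\frac{x}{2}$, so that $\Re(w)=1$. The factor $(x/2)^{\nu}$ cancels against the factor $(x/2)^{-\nu}$ produced by $u^{-\nu-1}\,du$. This gives
\begin{align*}
I_\nu(x)=\frac{1}{2\pi i}\int_{1-i\infty}^{1+i\infty} w^{-\nu-1}\exp\left(\frac{x}{2}\left(w+\frac{1}{w}\right)\right)dw .
\end{align*}
The phase $w+w^{-1}$ has a saddle point at $w=1$, where it equals $2$. Parametrizing $w=1+it$, we have $\Re\left(w+w^{-1}\right)=1+\frac{1}{1+t^2}$. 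This is maximal (equal to $2$) only at $t=0$, and it stays at most $2-c\min(t^2,1)$ for some $c>0$. The Taylor expansion near $t=0$ is
\begin{align*}
w+\frac{1}{w}=2-t^2+O(|t|^3).
\end{align*}

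\emph{Splitting the integral.} Fix a small $\epsilon>0$ and take $t_0=x^{-1/2+\epsilon}$. We treat $|t|\le t_0$ and $|t|>t_0$ separately.

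\textbf{Central range $|t|\le t_0$.} Here $x|t|^3=o(1)$ and $w^{-\nu-1}=1+O(|t|)$. Hence the integrand equals
\begin{align*}
e^{x}e^{-xt^2/2}\bigl(1+o(1)\bigr)
\end{align*}
uniformly on this range, while $dw=i\,dt$ cancels the $i$ in $2\pi i$. Extending the Gaussian integral to all of $\mathbb{R}$ costs only $O(e^{-x^{2\epsilon}/2})$ relative to the main term. The central part is therefore
\begin{align*}
\frac{e^x}{2\pi}\sqrt{\frac{2\pi}{x}}\bigl(1+o(1)\bigr)=\frac{e^x}{\sqrt{2\pi x}}\bigl(1+o(1)\bigr).
\end{align*}

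\textbf{Tails $|t|>t_0$.} Here $|w^{-\nu-1}|=(1+t^2)^{-(\nu+1)/2}$, which is integrable on $\mathbb{R}$ since $\nu>0$. The exponential factor is at most $e^{x}e^{-c x\min(t_0^2,1)}=e^{x}e^{-c x^{2\epsilon}}$. So the tails contribute $O\bigl(e^{x-cx^{2\epsilon}}\bigr)$, which is $o\bigl(e^x/\sqrt{x}\bigr)$.

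\emph{Main obstacle.} The only delicate point is choosing the cutoff $t_0$ so that two requirements hold at once. First, the cubic error $x|t|^3$ must vanish on the central range. Second, the Gaussian decay $xt_0^2\to\infty$ must dominate in the tails. The choice $t_0=x^{-1/2+\epsilon}$ with $0<\epsilon<1/6$ achieves both. Tail integrability is then automatic from $\Re(\nu)>0$. Combining the two ranges gives $I_\nu(x)\sim e^x/\sqrt{2\pi x}$.
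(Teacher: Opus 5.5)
Your proof is correct, and it takes a genuinely different route from the paper. The paper gives no proof of this lemma; it quotes it from Watson's treatise, where it is the leading term of the classical expansion $I_\nu(x)\sim\frac{e^x}{\sqrt{2\pi x}}\sum_{j\ge 0}(-1)^j a_j(\nu)x^{-j}$, valid for every fixed $\nu$.

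You instead apply Laplace's method to the vertical-line representation \eqref{eq_BF_1}, which the paper already states, and the details check out:
\begin{itemize}
\item the rescaling $u=\frac{x}{2}w$ gives the phase $\frac{x}{2}(w+w^{-1})$, with $\Re(w+w^{-1})=2-\frac{t^2}{1+t^2}\le 2-\frac12\min(t^2,1)$ on $\Re w=1$;
\item the cutoff $t_0=x^{-1/2+\epsilon}$ with $0<\epsilon<1/6$ gives both $xt_0^3\to0$ and $xt_0^2\to\infty$;
\item the tail bound $O(e^{x-cx^{2\epsilon}})$ is $o(e^x/\sqrt{x})$, because $(1+t^2)^{-(\nu+1)/2}$ is integrable.
\end{itemize}

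Your version is self-contained and uses only tools already in the paper. The citation buys generality: the full expansion, explicit lower-order terms, and no restriction on $\nu$. Your argument yields only the leading term, and only for $\Re\nu>0$, where \eqref{eq_BF_1} converges.

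That restriction is harmless here, since the lemma is only applied with $\nu=1$, in the proof of Corollary~\ref{cor_1}. If you want the lemma for every fixed $\nu$, as it is phrased, the recurrence $I_{\nu-1}(x)=I_{\nu+1}(x)+\frac{2\nu}{x}I_\nu(x)$ carries the asymptotic from $\Re\nu>0$ down to all $\nu$ in finitely many steps. For non-real $\nu$, note also that $|w^{-\nu-1}|=|w|^{-\Re\nu-1}e^{\Im(\nu)\arg w}$. The extra factor is at most $e^{\pi|\Im\nu|/2}$, so your tail estimate is unaffected.
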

\section{Proof of Theorem \ref{main_theorem}}\label{Sec3}
In this section, we prove Theorem \ref{main_theorem}, that is, we find Rademacher-type formula for $\ell$-regular overpartitions. 
\par Let $\ell > 2,$ be a squarefree odd integer and
$\overline{A}_{\ell}(n)$ be the number of $\ell$ regular overpartitions of $n$. Using the \emph{Cauchy Residue Theorem}, we get
\begin{align*}
\overline{A}_{\ell}(n) &=\frac{1}{2\pi i}\int_{C}\frac{\overline{A}_{\ell}(x)}{x^{n+1}}dx,
\end{align*}
where the integration path $C$ is the positively oriented circle centered at the origin with radius $e^{-2\pi}$. Using the parameterization $x=e^{2\pi i\tau}$ with $\tau\in\mathbb{H}$, we obtain
    \begin{align}\label{eqa}
    \overline{A}_{\ell}(n)&=\int_{i}^{i+1} \overline{A}_{\ell}(e^{2\pi i\tau})e^{-2\pi i n\tau}d\tau.
    \end{align}
Since the integrand is analytic, any contour joining $i$ to $i+1$ may be employed, here we adopt a standard Farey dissection contour. Fix a natural number $N$, consider the ford circle $C(h,k)$ associated with the farey fraction $\frac{h}{k}$ of order $N$. For each $\frac{h}{k}$ , let $\gamma(h,k)$ denote the upper arc of the Ford circle $C(h,k)$ connecting the tangency points $\tau_1(h,k)$ and $\tau_2(h,k)$, defined by \eqref{eq_Pt_of_ten}.
We change the path of integration in \eqref{eqa} to be the union of these upper arcs $\gamma(h,k)$ to have
\begin{align}\label{eq_897}
\overline{A}_{\ell}(n) &=\sum_{k=1}^{N}\sum_{\substack{h=0\\\gcd(h,k)=1}}^{k-1}\int_{\gamma(h,k)}\overline{A}_{\ell}(e^{2\pi i\tau})e^{-2\pi in\tau}d\tau.
\end{align}
Next, we employ $\tau=\frac{h}{k}+\frac{i\zeta}{k^2}$, which changes the limit of integration from the arc $\gamma(h,k)$ of the circle $C(h,k)$ to the circle $\lvert \zeta-\frac{1}{2}\rvert=\frac{1}{2}$ with $\zeta$ varying from $\zeta_{1}(h,k)$ to $\zeta_{2}(h,k)$, where
$$\zeta_{1}(h,k):=\frac{k^2+ ikk_{1}}{k^2+k_{1}^2}\quad \text{and} \quad \zeta_{2}(h,k):=\frac{k^2-ikk_{2}}{k^2+k_{2}^{2}}.$$
Then, \eqref{eq_897} becomes
\begin{align}
\overline{A}_{\ell}(n) &=\sum_{\substack{0\leq h<k\leq N\\\gcd(h,k)=1}}\frac{i}{k^{2}}\int_{\zeta_{1}(h,k)}^{\zeta_{2}(h,k)}\overline{A}_{\ell}\left(\exp\left(2\pi i\left(\frac{h}{k}+\frac{i\zeta}{k^2}\right)\right)\right)\exp{\left(-2\pi in\left(\frac{h}{k}+\frac{i\zeta}{k^2}\right)\right)}d\zeta\nonumber.
\end{align} 
Using transformation for $\overline{A}_{\ell}\bigg(\exp\left(2\pi i\left(\frac{h}{k}+\frac{i\zeta}{k^2}\right)\right)\bigg)$ from $\eqref{eq3l}$, we get
\begin{align*}
\overline{A}_{\ell}(n) =&\sum_{\substack{0\leq h<k\leq N\\\gcd(h,k)=1}}\frac{i}{k^{2}}W(h,k)\sqrt{\frac{\ell_{k}}{\ell}}e^{-2\pi in\frac{h}{k}}\int_{\zeta_{1}(h,k)}^{\zeta_{2}(h,k)} \exp{\left(\frac{\pi}{\zeta} \frac{d_{k}\ell_{k}\delta_{k}}{\ell}+\frac{2\pi n\zeta}{k^{2}}\right)}\mathcal{A}_{k}(\nu_{\zeta})d{\zeta}\\
=&\sum_{k=1}^{N}\frac{i}{k^{2}}\sqrt{\frac{\ell_{k}}{\ell}}\sum_{m=0}^{\infty}a(m,k)\sum_{\substack{0\leq h\leq k-1\\\gcd(h,k)=1}}W(h,k)\exp{\left(\frac{\pi i}{\ell k}(d_{k}\ell_{k}mH(h,k)-2\ell nh)\right)}\\
&\times\int_{\zeta_{1}(h,k)}^{\zeta_{2}(h,k)} \exp{\left(\frac{\pi d_{k}\ell_{k}(\delta_{k}-m)}{\ell\zeta}+\frac{2\pi n\zeta}{k^{2}}\right)}d{\zeta}.
\end{align*}
For the sake of ease, we define
\begin{align}\label{eq_defn_psi}
\psi_{m,k}(\zeta):= \exp{\left(\frac{\pi d_{k}\ell_{k}(\delta_{k}-m)}{\ell\zeta}+\frac{2\pi n\zeta}{k^{2}}\right)}.
\end{align}
Therefore, we have
\begin{align}\label{eqb}
\overline{A}_{\ell}(n)=&\sum_{k=1}^{N}\frac{i}{k^{2}}\sqrt{\frac{\ell_{k}}{\ell}}\sum_{m=0}^{\infty}a(m,k)\sum_{\substack{0\leq h \leq k-1\\\gcd(h,k)=1}}W(h,k) \exp\left(\frac{\pi i}{\ell k}(d_{k}\ell_{k}mH(h,k)-2\ell nh)\right)\\
&\times\int_{\zeta_{1}(h,k)}^{\zeta_{2}(h,k)}\psi_{m,k}(\zeta)d{\zeta}.\nonumber
\end{align}
Let us split the sum over $m$ into two parts, $m<\delta_{k} \,\text{and}\,m\geq \delta_{k}$, and write, respectively,
\label{4.4}
\begin{align}\label{eq_new4}
\overline{A}_{\ell}(n)=I_{1}(n;N)+I_{2}(n;N),
\end{align}
for the decomposition arising from \eqref{eqb}. Here, $I_{2}(n;N)$ is an error term that makes a negligible contribution to the sum, and $I_1(n;N)$ is the main term. We separately handle these terms in the next subsections.
\subsection{Estimates for $I_{2}(n;N)$}
To facilitate analysis, we partition the integration path connecting $\zeta_{1}(h,k)$ and $\zeta_{2}(h,k)$ into the three arcs $[\zeta(k_{1}),\zeta(N)]$, $[\zeta(-N),\zeta(N)]$, and $[\zeta(-N),\zeta(-k_{2})]$, where 
\begin{align*}
\zeta(t):=\frac{k^{2}}{k^{2}+t^{2}}+\frac{ikt}{k^{2}+t^2}.
\end{align*}
A direct calculation shows that all the points $\zeta(t)$ lie on the circle $|\zeta-\frac{1}{2}|=\frac{1}{2}$. We have three consecutive Farey fractions $\frac{h_1}{k_1}$, $\frac{h}{k}$, and $\frac{h_2}{k_2}$ of order $N$. We know that the sum of the denominators of two consecutive Farey fractions is greater than the order, i.e., $k+k_1>N$ and $k+k_2>N$. This implies that $k_1\geq N+1-k$ and $-k_2\leq k-N-1$, and we replace the arcs $[\zeta(k_{1}),\zeta(N)]$ and $[\zeta(-N),\zeta(-k_2)]$ by the union of arcs of the form $[\zeta(t),\zeta(t+1)]$ with $N+1-k\leq t\leq N-1$ and $-N\leq t\leq k-N-2$, respectively. Also, we have $hk_1-h_1k=1=h_2k-hk_2$, which implies that $hH(h,k)\equiv hk_2\equiv-hk_1\equiv-1\pmod{k}$. But $\gcd(h,k)=1$ and therefore $H(h,k)\equiv k_2\equiv-k_1\pmod{k}$. Thus, writing $-k_2\leq t\leq k_1-1$ is the same as writing $H(h,k)\in I_t$, for some interval $I_t$ modulo $k$, and this enables us to interchange the order of summation on $h$ and the integration in \eqref{eqb}. With all the discussion above, for $m\geq\delta_k$, \eqref{eqb} can be written as 
\begin{align}\label{eq_new0}
I_{2}(n;N) =\sum_{k=1}^{N}\frac{i}{k^{2}}\sqrt{\frac{\ell_{k}}{\ell}}\sum_{m\geq\delta_{k}}a(m,k)\left(T_{1}(m,k)+T_{2}(m,k)+T_{3}(m,k)\right),
\end{align}
where 
\begin{align*}
T_{1}=T_{1}(m,k)&=\sum_{t=N+1-k}^{N-1}\int_{\zeta(t)}^{\zeta(t+1)}\psi_{m,k}(\zeta)\Phi(k,m,t)d{\zeta},\\
T_{2}=T_{2}(m,k)&=\sum_{t=-N}^{k-N-2}\int_{\zeta(t)}^{\zeta(t+1)}\psi_{m,k}(\zeta)\Phi(k,m,t)d{\zeta},\\
T_{3}=T_{3}(m,k)&=\int_{\zeta(-N)}^{\zeta(N)}\psi_{m,k}(\zeta)\Phi(k,m,t)d{\zeta},
\end{align*}
with
\begin{align}\label{eq_defn_Phi}
\Phi(k,m,t):=\sum_{\substack{0\leq h \leq k-1\\\gcd(h,k)=1\\H(h,k)\in I_t}}W(h,k) \exp{\left(\frac{\pi i}{\ell k}(d_{k}\ell_{k}mH(h,k)-2\ell nh)\right)}.
\end{align}
\par In order to derive estimates for $T_{1}$, $T_{2}$, and $T_{3}$, we reformulate $\Phi(k,m,t)$ using Kloosterman sums, and using Weil's bound for Kloosterman sums, we give the following estimates for $\Phi(k,m,t)$.
\begin{lemma}\label{lemma_Phi}
We have
\begin{align*}
\Phi(k,m,t)\ll_{\ell,n,\varepsilon}k^{\frac{1}{2}+\varepsilon},
\end{align*}
where the implicit constant is independent of $k$.
\end{lemma}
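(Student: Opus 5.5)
We will bound $\Phi(k,m,t)$ by rewriting it as an incomplete Kloosterman sum with modulus $Qk$, then completing it and applying \cref{lemma_Weil}.

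\textbf{Step 1: put every factor of the summand over a common modulus.} We use the explicit formula \eqref{eq3a6} for $W(h,k)$.
\begin{itemize}
\item The factor $\rho(h,k)$ is $\exp\bigl(2\pi i\, c_k P' H(h,Qk)/(Qk)\bigr)$ with
\[
c_k = 24\Bigl(\tfrac{k^2(\ell-1)}{8} - d_k\ell_k\delta_k\Bigr).
\]
One checks that $c_k$ is an integer.
\item The phase $\exp\bigl(\frac{\pi i}{\ell k}(d_k\ell_k m H(h,k) - 2\ell n h)\bigr)$ can be written modulo $Qk$, since $2\ell \mid 48\ell = PQ$. Because $H(h,Qk)\equiv H(h,k) \pmod k$, it becomes $\exp\bigl(2\pi i(\alpha h + \beta H(h,Qk))/(Qk)\bigr)$ for integers $\alpha,\beta$ depending on $n,m,k,\ell$. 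These are obtained by multiplying numerator and denominator by the cofactor $Qk/(2\ell k)$ when that is integral. Otherwise we use the divisibility $P\mid H$ together with $P'$, as in \eqref{eq3b1}, to absorb the denominators coprime to $k$.
\item The remaining factors in $W$ are the Jacobi symbols $\bigl(\frac{-2h}{\ell_k}\bigr)$ and $\bigl(\frac{\ell_k}{h}\bigr)$, and for even $k$ the root of unity $\exp(-\pi i h(\cdots)/4)$. All of these depend only on $h$ modulo $8\ell_k$, which divides a bounded multiple of $Qk$.
\end{itemize}

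\textbf{Step 2: reduce to Kloosterman sums.} We expand the character in $h$ modulo $8\ell_k$ into additive characters; the number of terms is bounded in terms of $\ell$. We also lift $h$ from residues mod $k$ to residues mod $Qk$ coprime to $Qk$, which introduces at most the bounded factor $Q\le 48\ell$. The complete sum over $h$ is then a finite combination, with $O_\ell(1)$ terms, of sums
\[
\mathcal{K}(\alpha',\beta';Qk), \qquad \text{with } H(h,Qk)\equiv -h' .
\]

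\textbf{Step 3: handle the restriction $H(h,k)\in I_t$.} Here $I_t$ is an interval of residues. We detect it by completing with additive characters in the variable $H$:
\[
\mathbf 1_{I_t}(x)=\frac1k\sum_{r \bmod k}\sum_{y\in I_t} e\!\left(\frac{r(y-x)}{k}\right).
\]
The inner geometric sum is $\ll \min\bigl(|I_t|,\|r/k\|^{-1}\bigr)$, so the total weight is $\sum_r \min(\cdots)/k \ll \log k$. This shifts $\beta'$ by multiples of $Q r$.

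\textbf{Step 4: apply Weil.} \cref{lemma_Weil} bounds each Kloosterman sum by
\[
\gcd(\alpha',\beta',Qk)^{1/2}(Qk)^{1/2+\varepsilon}.
\]
The gcd is controlled by $\gcd(\alpha',Qk)$, and $\alpha'$ comes from $n$ and $\ell$ alone, so this gcd is $\ll_{\ell,n} 1$. Combining gives $\Phi \ll_{\ell,n,\varepsilon} k^{1/2+\varepsilon}\log k$, and the $\log k$ is absorbed into $k^{\varepsilon}$.

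\textbf{Main obstacle.} The hardest step is Step 1: verifying carefully that all exponents really are integral over the common denominator $Qk$, in both the odd and even $k$ cases, and that the dependence on $m$ does not inflate the gcd. Because $m$ enters only through $\beta$, and the gcd is already bounded via $\alpha'$, which depends only on $n$ and $\ell$, the bound is uniform in $m$.
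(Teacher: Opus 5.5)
Your proof is correct and follows the same skeleton as the paper's. You complete the restriction $H(h,k)\in I_t$ at a cost of $1+\log k$; the paper imports this from Lemma 1 of \cite{LP2012}, which shifts the coefficient of $H$ by $2\ell j$, and your additive-character completion shifting $\beta'$ by $-rQ$ amounts to the same thing. You then rewrite the complete sum over the modulus $Qk$ using $P\mid H(h,Qk)$ and $PP'\equiv 1\pmod{Qk}$, and apply Lemma~\ref{lemma_Weil} with the gcd controlled through the coefficient of $h$.

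The one place you do more than the paper is Step 2. The paper goes from \eqref{eq3d1} to \eqref{eq_phi} as if $W(Ph,k)$ contributed only additive phases. In fact it still contains the $h$-dependent symbols $\bigl(\frac{-2Ph}{\ell_k}\bigr)$ for odd $k$, or $\bigl(\frac{\ell_k}{Ph}\bigr)$ for even $k$, so what actually appears is a character-twisted Kloosterman sum. Your finite Fourier expansion of these factors into $O_\ell(1)$ additive characters is what legitimately reduces it to plain sums $\mathcal{K}(\alpha',\beta';Qk)$.

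That expansion does, however, falsify your sentence that $\alpha'$ comes from $n$ and $\ell$ alone. Let $M$ be the period of the expanded factor: $M=\ell_k$ for odd $k$ and $M=8\ell_k$ for even $k$, and in both cases $M\mid Qk$. Then $\alpha'=\alpha_0+aQk/M$, where $\alpha_0$ is the coefficient coming from $e(-nh/k)$, so $|\alpha_0|\le 48\ell n$ while $\alpha'$ itself depends on $k$. The bound survives with one extra line. Since $g=\gcd(\alpha',\beta',Qk)$ divides both $\alpha'$ and $Qk$, it divides $M\alpha'-aQk=M\alpha_0$. Hence $g\le 384\ell^2 n$, which is all Lemma~\ref{lemma_Weil} needs to give $\Phi(k,m,t)\ll_{\ell,n,\varepsilon}k^{1/2+\varepsilon}$ uniformly in $m$ and $t$.
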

\begin{proof}
In the proof, we use Lemma 1 of \cite{LP2012}. On the similar lines of \cite[Lemma 1]{LP2012}, with $r$ and $s$ replaced by $2$ and $\ell$, respectively, one can prove the following: For all $k$ and $m$, there exists $j\in\{ 0,\ldots,k-1\}$ such that for every $t$, we have
\begin{align*}
\Phi(k,m,t)\ll (1+\log{k})\Bigg\lvert
\sum_{\substack{0\leq h \leq k-1\\\gcd(h,k)=1}}W(h,k) \exp{\left(\frac{\pi i}{\ell k}\left((d_{k}\ell_{k}m+2\ell j)H(h,k)-2\ell nh)\right)\right)}\Bigg\rvert.
\end{align*}
Let us denote the sum on the R.H.S. of the above expression by $\phi(k,m)$ to have
\begin{align}\label{eq_Phi}
\Phi(k,m,t)\ll (1+\log{k})\phi(k,m).
\end{align}
Next, we write some observations. First, $W(h+ck,k)=W(h,k)$, for all $c\in\mathbb{Z}$. Second, $48\ell=PQ$ and $\gcd(h,k)=1$ if and only if $\gcd(h,Qk)=1$. Third,
\begin{align*}
H(h,Qk)\equiv-h^{-1}\equiv PH(Ph,Qk)\pmod{Qk}
\end{align*}
or
\begin{align*}
H(Ph,Qk)\equiv P'H(h,Qk)\pmod{Qk}.
\end{align*}
Fourth, like $h$, $Ph$ also runs over a reduced residue system modulo $Qk$.
From all these observations, we find that
\begin{align}\label{eq3d1}
\phi(k,m)&=\frac{1}{Q}\sum_{\substack{0\leq h \leq Qk-1\\\gcd(h,Qk)=1}}W(h,k) \exp{\left(\frac{2\pi i}{Q k}\left((24P'd_{k}\ell_{k}m+jQ)H(h,Qk)-Qnh)\right)\right)}\nonumber\\
&=\frac{1}{Q}\sum_{\substack{0\leq h \leq Qk-1\\\gcd(h,Qk)=1}}W(Ph,k) \exp{\left(\frac{2\pi i}{Q k}\left((24P'd_{k}\ell_{k}m+jQ)P'H(h,Qk)-PQnh)\right)\right)}.
\end{align}
Substituting the values of $W(Ph,k)$ from \eqref{eq3a6} into \eqref{eq3d1}, we have the following
\begin{align}\label{eq_phi}
\lvert\phi(k,m)\rvert\leq Q^{-1}\lvert \mathcal{K}(\alpha,\beta;Qk)\rvert,
\end{align}
where we have the Kloosterman sum (see, \eqref{eq_KS}) on the right with $j=h$, $j'=- H(h,Qk)$, 
\begin{align*}
\alpha=
\begin{cases}       
-PQn, &  \text{if }k\text{ is odd},\\
3\ell k^2\left(1-\frac{\ell}{\ell_k^2}\right)+\frac{Qk}{8}\left(1-\frac{\ell}{\ell_k}\right)-PQn, &  \text{if }k\text{ is even},
\end{cases}
\end{align*}
and
\begin{align*}
\beta=24P'^{2}\left(d_k\ell_k(\delta_k-m)-\frac{k^2(\ell-1)}{8}\right)-jP'Q.
\end{align*}
To use Lemma \ref{lemma_Weil}, we estimate $\gcd\left(\alpha,\beta,Qk\right)$, with $\alpha$ and $\beta$ as defined above. For simplicity, let $g:=\gcd\left(\alpha,\beta,Qk\right)$. We consider two cases depending on the parity of $k$.\\
Case I: When $\alpha=-PQn=-48\ell n$. In this case, $g\mid48\ell n$\textcolor{red}{,} and therefore $g\ll_{\ell,n}1$.\\
Case II. When $\alpha=3\ell k^2\left(1-\frac{\ell}{\ell_k^2}\right)+\frac{Qk}{8}\left(1-\frac{\ell}{\ell_k}\right)-PQn$. In this case, $g\mid\alpha$ implies that
$$g\mid 16\alpha=48\ell k^2\left(1-\frac{\ell}{\ell_k^2}\right)+2Qk\left(1-\frac{\ell}{\ell_k}\right)-16PQn.$$
But, since $g\mid Qk$ \textcolor{red}{,}and $Qk\mid PQk=48\ell k$, $g$ also divides
$$48\ell k^2\left(1-\frac{\ell}{\ell_k^2}\right)+2Qk\left(1-\frac{\ell}{\ell_k}\right)=(48\ell k)\frac{k}{\ell_k}\left(\frac{\ell_k^2-\ell}{\ell_k}\right)+2Qk\left(1-\frac{\ell}{\ell_k}\right).$$
Therefore, $g\mid 16PQn=768\ell n$. Hence, $g\ll_{\ell,n}1$.\\
From both the cases above, we conclude that $\gcd\left(\alpha,\beta,Qk\right)\ll_{\ell,n}1$. Lemma \ref{lemma_Weil} and \eqref{eq_phi} yield
\begin{align*}
\phi(k,m)\ll_{\ell,n,\varepsilon}(Qk)^{\frac{1}{2}+\varepsilon}.
\end{align*}
Finally, from \eqref{eq_Phi}, we obtain 
\begin{align*}
\Phi(k,m,t)\ll_{\ell,n,\varepsilon}k^{\frac{1}{2}+\varepsilon}.
\end{align*}
This completes the proof of the lemma.
\end{proof}
In order to estimate $I_2(n;N)$, we need estimates for $a(m,k)$, $\Phi(k,m,t)$, and $\psi_{m,k}(\zeta)$, see \eqref{eq_new0}. With \eqref{eq_a_m,k_bound} and Lemma \ref{lemma_Phi}, it is only left to estimate $\psi_{m,k}(\zeta)$. For that, we note that since $\Re(\zeta)\leq1$\textcolor{red}{,} for $\lvert\zeta-\frac{1}{2}\rvert\leq\frac{1}{2}$, we have
\begin{align*}
\psi_{m,k}(\zeta)\ll\exp{\left(\frac{\pi d_{k}\ell_{k}(\delta_{k}-m)}{\ell\zeta}\right)},
\end{align*}
where the implicit constant is independent of $k$. For $\lvert\zeta-\frac{1}{2}\rvert\leq\frac{1}{2}$, we also have $\Re\left(\frac{1}{\zeta}\right)\geq1$. Therefore, 
\begin{align}\label{eq_bound_psi}
\psi_{m,k}(\zeta)\ll1~(\text{when }m\geq\delta_{k}) \text{ and } \psi_{m,k}(\zeta)\ll\exp{\left(\frac{-\pi m}{2\ell}\right)}~(\text{when }m\geq2\delta_{k}).
\end{align}
\par We are now ready to estimate $I_2(n;N)$. We rewrite \eqref{eq_new0} as follows
\begin{equation}\label{eqc}
I_{2}(n,N)=i\sum_{k=1}^{N}\sqrt{\frac{\ell_{k}}{\ell}}\sum_{m\geq\delta_{k}}a(m,k)\left(\frac{T_{1}}{k^2}+\frac{T_{2}}{k^2}+\frac{T_{3}}{k^2}\right).
\end{equation}
Using the estimation of $\Phi(k,m,t)$ and $\psi_{m,k}(\zeta)$ from Lemma \ref{lemma_Phi} and \eqref{eq_bound_psi}, respectively, we derive
\begin{align}\label{eq_new1}
\left|\frac{T_{1}}{k^2}\right|&\leq \sum_{t=N+1-k}^{N-1}\frac{1}{k^{2}}\int_{\zeta(t)}^{\zeta(t+1)}|\psi_{m,k}(\zeta)||\Phi(k,m,t)|d\zeta\nonumber\\
&\leq \frac{k^{1/2+\varepsilon}}{k^2}\int_{\zeta(N+1-k)}^{\zeta(N-1)}|\psi_{m,k}(\zeta)|d\zeta\nonumber\\
&\leq {\frac{k^{1/2+\varepsilon}}{k^2}}\exp{\left(\frac{-\pi m}{2\ell}\right)}\int_{\zeta(N+1-k)}^{\zeta(N-1)}d\zeta\nonumber\\
&\leq \frac{k^{1/2+\varepsilon}}{k^2}{\frac{k^2}{N^2}}\exp{\left(\frac{-\pi m}{2\ell}\right)}=\frac{k^{1/2+\varepsilon}}{N^2}\exp{\left(\frac{-\pi m}{2\ell}\right)},
\end{align}
where we use the fact that
\begin{align*}
|\zeta(N)-\zeta(N-k+1)|&\ll\frac{k^{2}}{N^{2}}.
\end{align*}
Similarly, one can find that
\begin{align}\label{eq_new2}
\left|\frac{T_{2}}{k^2}\right|\ll\frac{k^{1/2+\varepsilon}}{N^2}\exp{\left(\frac{-\pi m}{2\ell}\right)},
\end{align}
and 
\begin{align}\label{eq_new3}
\left|\frac{T_{3}}{k^2}\right|\ll\frac{k^{1/2+\varepsilon}}{k^2}\exp{\left(\frac{-\pi m}{2\ell}\right)}\int_{\zeta(-N)}^{\zeta(N)}d\zeta\ll\frac{k^{1/2+\varepsilon}}{Nk^2}\exp{\left(\frac{-\pi m}{2\ell}\right)},
\end{align}
where we use the following:
\begin{align*}
|\zeta(k-N-1)-\zeta(N)|&\ll\frac{k^2}{N^2},\\
|\zeta(N)-\zeta(-N)|&\ll\frac{1}{N}.
\end{align*}
Now, the use of \eqref{eq_new1}, \eqref{eq_new2}, and \eqref{eq_new3}, in \eqref{eqc} gives us
\begin{align*}
I_{2}(n;N)&\ll\sum_{k=1}^{N}\left(\frac{k^{1/2+\varepsilon}}{N^2}+\frac{k^{-3/2+\varepsilon}}{N}\right) \left(\sum_{ \delta_{k}\leq m\leq2\delta_k}|a(m,k)|+\sum_{m>2\delta_{k}}|a(m,k)|\right)\exp{\left(\frac{-\pi m}{2\ell}\right)}\\
&\ll\sum_{k=1}^{N}\left(\frac{k^{1/2+\varepsilon}}{N^2}+\frac{k^{-3/2+\varepsilon}}{N}\right)\left(1+\sum_{m>2\delta_{k}}|a(m,k)|\exp{\left(\frac{-\pi m}{2\ell}\right)}\right)\\
&\ll N^{-\frac{1}{2}+\varepsilon},
\end{align*}
where we use \eqref{eq_a_m,k_bound} noting that the implicit constant there doesn't depend on $k$.
Hence,
\begin{equation}
I_{2}(n;N)={O}\left(N^{\frac{-1}{2}+\varepsilon}\right).\nonumber
\end{equation}
Then, from \eqref{eq_new4}
\begin{align*}
\overline{A}_{\ell}(n)=I_{1}(n;N)+I_{2}(n;N)=I_{1}(n;N)+{O}\left(N^{\frac{-1}{2}+\varepsilon}\right),
\end{align*}
and therefore,
\begin{align*}
\overline{A}_{\ell}(n)=I_{1}(n;N)
\end{align*}
as $N\to\infty$.
\subsection{The main term $I_1(n;N)$}
Our attention now turns to the main contribution for $\overline{A}_{\ell}(n)$ by the main term associated with $0\leq m<\delta_{k}$. Since $\delta_k=0\textcolor{red}{,}$ for even values of $k$, we only consider the odd values of $k$ in the main term. We have
\begin{align}\label{eqd}
I_{1}(n;N)=&\sum_{\substack{k=1\\k \text{ odd}}}^{N}\frac{i}{k^2}\sqrt{\frac{\ell_{k}}{\ell}}\sum_{m=0}^{\lfloor\delta_{k}\rfloor}a(m,k){\sum_{\substack{h=0\\\gcd(h,k)=1}}^{k-1}}W(h,k) \times\exp\left(\frac{\pi i}{\ell k}(\ell_{k}mH(h,k)-2\ell nh)\right)\\ \nonumber
& \times \int_{\zeta_{1}(h,k)}^{\zeta_{2}(h,k)}\psi_{m,k}(\zeta)d\zeta.
\end{align}
Throughout, $K$ denotes the circle of radius $1/2$ and centered at $1/2$, while $K^{-}$ denotes this contour traversed in the clockwise direction. We use the following decomposition to each integral of \eqref{eqd}, as shown in Figure \ref{fig:placeholder}
\begin{align}\label{eq_new7}
\int_{\zeta_{1}(h,k)}^{\zeta_{2}(h,k)}=\int_{K^{-}}-\int_{0}^{\zeta_{1}(h,k)}-\int_{\zeta_{2}(h,k)}^{0}.
\end{align}

\begin{figure}
        \centering
        \includegraphics[width=0.3\linewidth]{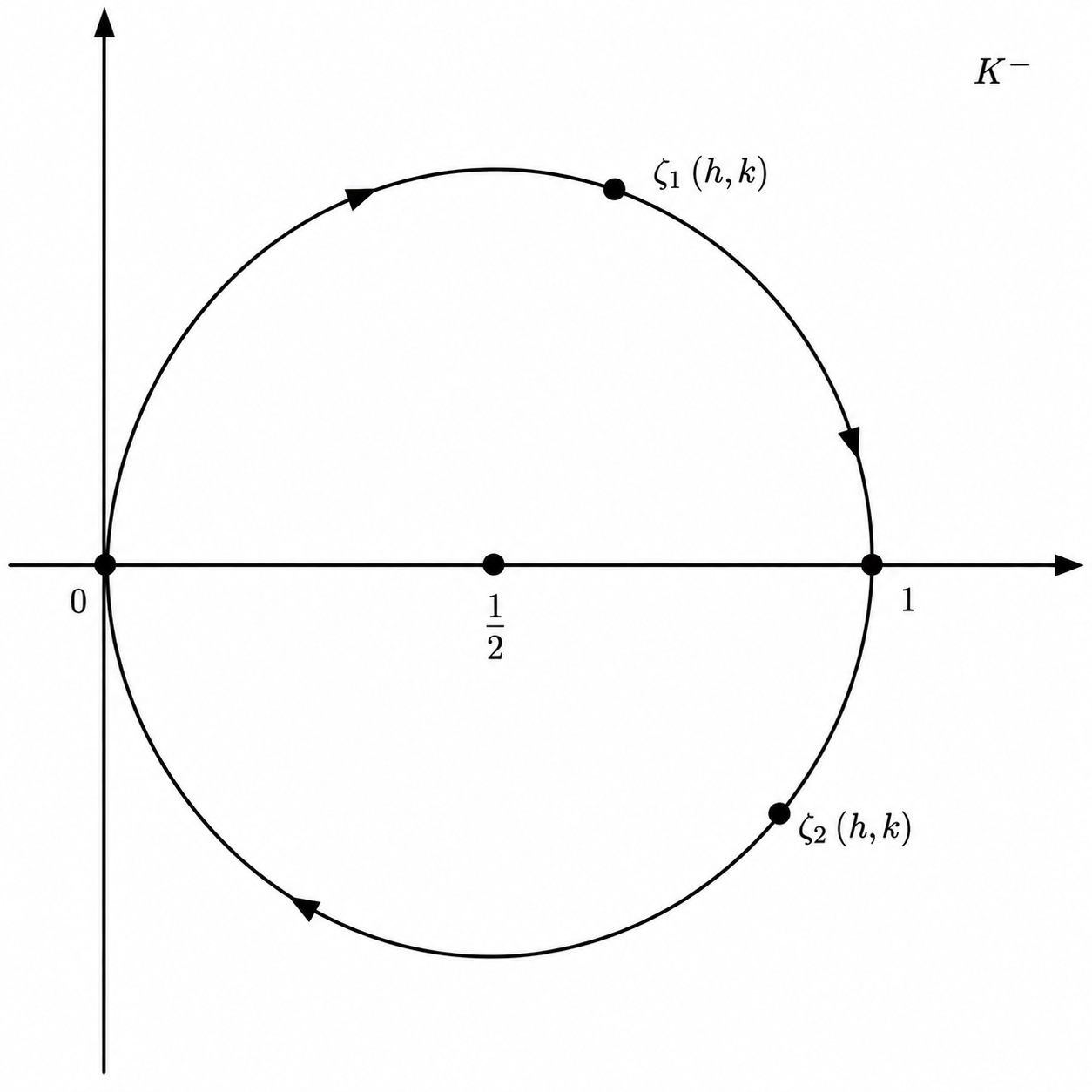}
    \caption{Decomposition of integral from $\zeta_{1}(h,k)$ to $\zeta_{2}(h,k)$}
    \label{fig:placeholder}
\end{figure}
We will show that the second and third integrals on the R.H.S. of \eqref{eq_new7} make negligible contributions to the integral over the arc from $\zeta_{1}(h,k)$ to $\zeta_{2}(h,k)$, and that only the first integral, i.e., the integral over $K^{-}$ matters. Arguing exactly as in \eqref{eqc}, we conclude that the contribution due to integrals $-\int_{0}^{{\zeta}_{1}(h,k)}-\int_{\zeta_{2}(h,k)}^{0}$ is bounded by 
\begin{align*}
I_{3}(n;N):=\sum_{\substack{k=1\\k \text{ odd}}}^{N}\frac{1}{k^2}\sum_{m=0}^{\lfloor\delta_{k}\rfloor}|T_{1}^{'}(m,k)|+|T_{2}^{'}(m,k)|+|T^{'}_{3}(m,k)|,
\end{align*}
where 
\begin{align*}
T_{1}^{'}(m,k)&=\sum_{t=N+1-k}^{N-1}\int_{\zeta(t)}^{\zeta(t+1)}\psi_{m,k}(\zeta)\Phi(k,m,t)d\zeta,\\ 
T_{2}^{'}(m,k)&=\sum_{t=-N}^{k-N-2}\int_{\zeta(t)}^{\zeta(t+1)}\psi_{m,k}(\zeta)\Phi(k,m,t)d\zeta,\\
T_{3}^{'}(m,k)&=\int_{\zeta(-N)}^{\zeta(N)}\psi_{m,k}(\zeta)\Phi(k,m,t)d\zeta,
\end{align*}
with $\psi_{m,k}(\zeta)$ and $\Phi(k,m,t)$ given by \eqref{eq_defn_psi} and \eqref{eq_defn_Phi}, respectively. Using Lemma \ref{lemma_Phi}, and the estimates
\begin{align*}
|\zeta(N)-\zeta(N-k+1)|\ll\frac{k}{N},~|\zeta(-N)-\zeta(k-N-1)|\ll\frac{k}{N},~|\zeta(N)-\zeta(-N)|\ll\frac{k}{N},
\end{align*}
along with $|\psi_{m,k}(\zeta)|\ll1$ (for $m<\delta_{k}$), we obtain
$$|T_{j}^{'}(m,k)|\ll k^{\frac{1}{2}+\varepsilon}\frac{k}{N}=\frac{k^{\frac{3}{2}+\varepsilon}}{N},$$ for all $j\in\{1,2,3\}.$
It follows that 
\begin{align*}
I_{3}(n,N)&\ll\sum_{\substack{k=1\\ k\,\text{odd}}}^{N}\frac{1}{k^2}\frac{k^{\frac{3}{2}+\varepsilon}}{N}\nonumber=\frac{1}{N}\sum_{k=1}^{N}k^{\frac{-1}{2}+\varepsilon}\ll N^{\frac{-1}{2}+\varepsilon}.\end{align*}
This proves our claim about the negligible contribution of  $-\int_{0}^{\zeta_{1}(h,k)}-\int_{\zeta_{2}(h,k)}^{0}$ to \eqref{eqd}. Therefore, we replace $\int_{\zeta_{1}(h,k)}^{\zeta_{2}(h,k)}$ with $\int_{K^{-}}$ in \eqref{eqd} to get
\begin{align}\label{eq_166}
\overline{A}_{\ell}(n)&=\sum_{\substack{k=1\\k \text{ odd}}}^{N}\frac{i}{k^{2}}\sqrt{\frac{\ell_{k}}{\ell}}\sum_{m=0}^{\lfloor\delta_{k}\rfloor}A_{k,m}(n) \int_{K^{-}}\exp{\left(\frac{\pi \ell_{k}(\delta_{k}-m)}{\ell \zeta}+\frac{2\pi n\zeta}{k^2}\right)}d\zeta+O\left(N^{\frac{-1}{2}+\varepsilon}\right),
\end{align} 
where $A_{k,m}(n)$ is defined by \eqref{eqe}. As $N\to\infty$, \eqref{eq_166} becomes
\begin{align}\label{eqfnew}
\overline{A}_{\ell}(n)&=\sum_{\substack{k=1\\k \text{ odd}}}^{\infty}\frac{i}{k^{2}}\sqrt{\frac{\ell_{k}}{\ell}}\sum_{m=0}^{\lfloor\delta_{k}\rfloor}A_{k,m}(n) \int_{K^{-}}\exp{\left(\frac{\pi \ell_{k}(\delta_{k}-m)}{\ell \zeta}+\frac{2\pi n\zeta}{k^2}\right)}d\zeta.
\end{align}  
To calculate this, consider a change of variable $\zeta=\frac{1}{w}$. Then limits of integration change from the circular path $K^{-}$ to a vertical trajectory ranging from $1-i\infty$ to $1+i\infty$. Hence, \eqref{eqfnew} can be transformed into \begin{equation}\label{eqg}
 \overline{A}_{\ell}(n)=\sum_{\substack{k=1\\ k \,\text{odd}}}^{\infty}\sum_{m=0}^{\lfloor\delta_{k}\rfloor}A_{k,m}(n)\frac{i}{k^2}\sqrt{\frac{\ell_{k}}{\ell}} \int_{1-i\infty}^{1+i\infty} \exp{\left(\frac{\pi w \ell_{k}(\delta_{k}-m)}{\ell}+\frac{2\pi n}{wk^2}\right)}\left(\frac{-1}{w^2}\right) dw.
\end{equation}
Next, let $s:=\frac{\pi \ell_{k}(\delta_{k}-m)}{\ell}$, and make a change of variable considering $u=sw$.
Then, \eqref{eqg} can be expressed as 
\begin{align*}
\overline{A}_{\ell}(n)=&\sum_{\substack{k=1\\k \text{ odd}}}^{\infty}\sum_{m=0}^{\lfloor\delta_{k}\rfloor}A_{k,m}(n)\frac{2\pi s}{k^2}\sqrt{\frac{\ell_{k}}{\ell}}\frac{1}{2\pi i} \int_{s-i\infty}^{s+i\infty}\frac{1}{u^2}\exp{\left(u+\frac{2\pi^2 \ell_{k}n(\delta_{k}-m)}{k^{2}\ell}\frac{1}{u}\right)}du\\
=&\sum_{\substack{k=1\\k \text{ odd}}}^{\infty}\sum_{m=0}^{\lfloor\delta_{k}\rfloor}\frac{2\pi A_{k,m}(n)}{k}\sqrt{\frac{\ell_{k}^{2}(\delta_{k}-m)}{2\ell^2n}} \frac{1}{2\pi i}\sqrt{\frac{2\pi^2 \ell_{k}n(\delta_{k}-m)}{k^{2}\ell}}\\
&\times\int_{s-i\infty}^{s+i\infty}u^{-1-1}\exp{\left(u+\frac{\sqrt{\frac{2\pi^2\ell_{k}n(\delta_{k}-m)}{k^2\ell}}}{u}\right)}du.
\end{align*}
Now, set
$$\frac{\zeta}{2}=\sqrt{\frac{2\pi^2\ell_{k}n(\delta_{k}-m)}{k^{2}\ell}}.$$
 It follows that
 \begin{align}
\overline{A}_{\ell}(n)&=\sum_{\substack{k=1\\k \text{ odd}}}^{\infty}\sum_{m=0}^{\lfloor\delta_{k}\rfloor}\frac{2\pi A_{k,m}(n)}{k}\sqrt{\frac{\ell_{k}^{2}(\delta_{k}-m)}{2\ell^{2}n}}\frac{1}{2\pi i}\left(\frac{\zeta}{2}\right)\int_{s-i\infty}^{s+i\infty}u^{-1-1}\exp{\left(u+\frac{\zeta^{2}}{4u}\right)}du\nonumber\\\label{eqh}
&=\sum_{\substack{k=1\\k \text{ odd}}}^{\infty}\sum_{m=0}^{\lfloor\delta_{k}\rfloor}\frac{2\pi A_{k,m}(n)}{k}\sqrt{\frac{\ell_{k}^{2}(\delta_{k}-m)}{2\ell^{2}n}}\times I_{1}\left(\frac{4\pi}{k}\sqrt{\frac{\ell_{k}n(\delta_{k}-m)}{2\ell}}\right).
\end{align}
Here, $I_{1}(s)$ is the modified Bessel function of the first kind, defined by \eqref{eq_BF_1}.
This proves Theorem \ref{main_theorem} and \eqref{eqh} gives the desired Rademacher-type formula for the function $\overline{A}_{\ell}(n)$.
\section{Proofs of Corollary \ref{cor_1} and Theorem \ref{Theorem2}}\label{Sec4}
In this section, we prove Corollary \ref{cor_1} and Theorem \ref{Theorem2}.
\begin{proof}[Proof of Corollary \ref{cor_1}] We write \eqref{eq1} as
\begin{align}\label{eq5.0}
\overline{A}_{\ell}(n)=M(n)+E(n),
\end{align}
where $M(n)$ is the main term corresponding to $k=1$ and $m=0$ (and hence $\ell_k=1$), and $E(n)$ is the error term due to the remaining terms. The main term $M(n)$ is explicitly 
\begin{align}\label{eq5.1}
M(n)=\frac{\pi}{2\ell}\sqrt{\frac{\ell-1}{n}}I_1\left(\pi\sqrt{n\left(1-\frac{1}{\ell}\right)}\right).
\end{align}
Using the asymptotic expansion \eqref{eq_I_asym} in \eqref{eq5.1}, we get
\begin{align}\label{eq5.2}
M(n)\sim\frac{1}{2\sqrt{2\ell}}\left(1-\frac{1}{\ell}\right)^{\frac{1}{4}}\left(\frac{1}{n}\right)^{\frac{3}{4}}e^{\pi\sqrt{n\left(1-\frac{1}{\ell}\right)}}
\end{align}
as $n\rightarrow\infty$. For the remaining terms in \eqref{eq1}, let $\beta=\frac{\ell_k}{2\ell}(\delta_k-m)$. We use \eqref{eq_I_asym}, once again, to find the following asymptotic expansion of $I_1$ in \eqref{eq1} for any $k\geq2$:
\begin{align}\label{eq5.3}
I_1\left(\frac{4\pi}{k}\sqrt{\beta n}\right)\sim\frac{1}{\sqrt{\frac{8\pi^2}{k}\sqrt{\beta n}}}\exp\left(\frac{4\pi}{k}\sqrt{\beta n}\right),
\end{align}
as $n\rightarrow\infty$.
Then using \eqref{eq5.3}, we get the following asymptotic expansion of the terms in \eqref{eq1} for any $k\geq2$:
\begin{align*}
\frac{2\pi}{k}A_{k,m}(n)\sqrt{\frac{\ell_k\beta}{\ell n}}I_1\left(\frac{4\pi}{k}\sqrt{\beta n}\right)\sim\frac{1}{\frac{2\pi\sqrt{2k}}{\beta^{1/4}}n^{3/4}}\exp\left(\frac{4\pi}{k}\sqrt{\beta n}\right),
\end{align*}
where the R.H.S. goes to 0 as $n\rightarrow\infty$. This along with \eqref{eq5.0} and \eqref{eq5.2} proves Corollary \ref{cor_1}.
\end{proof}
\begin{proof}[Proof of Theorem \ref{Theorem2}]
From \eqref{eq2.3}, for any $r\geq0$, we obtain
\begin{align*}
\frac{\overline{A}_{\ell}(n+r)}{\overline{A}_{\ell}(n)}\sim\left(\frac{n}{n+r}\right)^{\frac{3}{4}}\exp\left(\pi\sqrt{(n+r)\left(1-\frac{1}{\ell}\right)}-\pi\sqrt{n\left(1-\frac{1}{\ell}\right)}\right),
\end{align*}
which, on taking logarithm on both sides, becomes
\begin{align}\label{eq5.6}
\log\left(\frac{\overline{A}_{\ell}(n+r)}{\overline{A}_{\ell}(n)}\right)\sim\pi\sqrt{1-\frac{1}{\ell}}\left(\sqrt{n+r}-\sqrt{n}\right)-\frac{3}{4}\log\left(1+\frac{r}{n}\right).
\end{align}
Next, we use the series
\begin{align*}
\log(1+x)=\sum_{j=1}^{\infty}(-1)^{j+1}\frac{x^j}{j}
\end{align*}
for $x=\frac{r}{n}$, and the Taylor series of
$\sqrt{1+x}=\sum_{j=0}^{\infty}\binom{1/2}{j}x^j$
for $x=\frac{r}{n}$ to get
\begin{align*}
\sqrt{n+r}-\sqrt{n}=\sqrt{n}\left(\sqrt{1+\frac{r}{n}}-1\right)=\sum_{j=1}^{\infty}\binom{1/2}{j}\frac{r^j}{n^{j-1/2}}.
\end{align*}
Thus, \eqref{eq5.6} becomes
\begin{align}\label{eq5.7}
\log\left(\frac{\overline{A}_{\ell}(n+r)}{\overline{A}_{\ell}(n)}\right)\sim\pi\sqrt{1-\frac{1}{\ell}}\sum_{j=1}^{\infty}\binom{1/2}{j}\frac{r^j}{n^{j-1/2}}+\frac{3}{4}\sum_{j=1}^{\infty}\frac{(-1)^jr^j}{jn^j}.
\end{align}
The R.H.S. of \eqref{eq5.7} can be written as
\begin{align}\label{eq5.8}
\left(\frac{\pi}{2}\sqrt{\left(1-\frac{1}{\ell}\right)\frac{1}{n}}-\frac{3}{4n}\right)r&-\left(\frac{\pi}{8}\sqrt{\left(1-\frac{1}{\ell}\right)\frac{1}{n^3}}-\frac{3}{8n^2}\right)r^2\\
&+\sum_{j=3}^{\infty}\left(\pi\sqrt{1-\frac{1}{\ell}}\binom{1/2}{j}\frac{1}{n^{j-1/2}}+\frac{3}{4}\frac{(-1)^j}{jn^j}\right)r^j.\nonumber
\end{align}
Comparing \eqref{eq5.8} with \eqref{eq3.2}, we take
\begin{align*}
b(n)=\frac{\pi}{2}\sqrt{\left(1-\frac{1}{\ell}\right)\frac{1}{n}}-\frac{3}{4n},\\
\delta(n)=\sqrt{\frac{\pi}{8}\sqrt{\left(1-\frac{1}{\ell}\right)\frac{1}{n^3}}-\frac{3}{8n^2}},
\end{align*}
and
\begin{align*}
c_j(n)=\pi\sqrt{1-\frac{1}{\ell}}\binom{1/2}{j}\frac{1}{n^{j-1/2}}+\frac{3}{4}\frac{(-1)^j}{jn^j},
\end{align*}
for all $j\geq3$. One can easily check that
$\lim_{n\to\infty}\frac{c_j(n)}{(\delta(n))^j}=0$ for $3\leq j\leq d$,
and $\lim_{n\to\infty}\frac{c_j(n)}{(\delta(n))^d}=0$ for $j> d$. This implies that all the assumptions of Theorem \ref{theorem_GORZ} are being satisfied and therefore, the Jensen polynomials $J^{d,n}_{\overline{A}_{\ell}}$ can be written in terms of Hermite polynomials. Since, Hermite polynomials are hyperbolic for sufficiently large values of $n$, $J^{d,n}_{\overline{A}_{\ell}}$ are also hyperbolic for sufficiently large values of $n$. This proves Theorem \ref{Theorem2}.
\end{proof}

\section{Concluding Remarks}\label{Sec5}
Peng, Zhang, and Zhong \cite{Helen25} studied log-concavity and third-order Tur\'{a}n inequalities for the $\ell$-regular overpartition function $\overline{A}_{\ell}(n)$ for $2\leq \ell \leq 9$. The authors \cite{Helen25} used the asymptotic formula for the Fourier coefficients of general eta-quotients derived by Chern \cite{Chern2019}, which restricted them to values $2\leq \ell \leq 9$. However, in this article, we establish Rademacher-type formula (and hence an asymptotic formula) for $\overline{A}_{\ell}(n)$ for all squarefree odd integers $\ell$. Peng, Zhang, and Zhong provided the exact values, say $n_{\ell}$ and $\overline{n}_{\ell}$, such that $\overline{A}_{\ell}(n)$ satisfies log-concavity for all $n\geq n_{\ell}$ and it satisfies the third-order Tur\'{a}n inequalities for all $n\geq \overline{n}_{\ell}$, when $2\leq \ell \leq 9$, see, \cite[Theorem 1.3]{Helen25}. They also conjectured \cite[Conjecture 5.1]{Helen25} the values of $n_{\ell}$ and $\overline{n}_{\ell}$ for $10\leq \ell \leq 19$. In Theorem \ref{Theorem2}, we prove that for $d\geq1$ and a squarefree odd integer $\ell$, $\overline{A}_{\ell}(n)$ satisfies $d$-th order Tur\'{a}n inequality for all but finitely many values of $n$. In particular, we prove log-concavity ($d=2$) and third-order Tur\'{a}n inequality ($d=3$) for $\overline{A}_{\ell}(n)$, for all squarefree odd integers $\ell$. It would be interesting to provide the explicit bound on $n$ for which the higher order Tur\'{a}n inequalities for $\overline{A}_{\ell}(n)$ are satisfied. More precisely, the following question arises naturally: Can we find a positive integer $n_{\ell,d}$ such that $\overline{A}_{\ell}(n)$ satisfies the $d$-th order Tur\'{a}n inequality for all $n\geq n_{\ell,d}$?

\end{document}